\def\Q{\mathbb{Q}}
\def\G{\mathbb{G}}
\def\P{\mathbb{P}}
\def\Z{\mathbb{Z}}
\def\C{\mathbb{C}}
\def\kk{K3 surface }
\def\ks{K3 surfaces }
\def\ISM{irreducible symplectic manifold }
\def\ISMs{irreducible symplectic manifolds }
\def\LF{Lagrangian fibration }
\def\LFs{Lagrangian fibrations }
\def\EE{\mathcal{E}}
\def\FF{\mathcal{F}}
\def\GG{\mathcal{G}}
\def\HH{\mathcal{H}}
\def\JJ{\mathcal{J}}
\def\LL{\mathcal{L}}
\def\NN{\mathcal{N}}
\def\OO{\mathcal{O}}
\def\PP{\mathcal{P}}
\def\SS{\mathcal{S}}
\newcommand{\Aut}{\operatorname{Aut}\nolimits}
\newcommand{\Bl}{\operatorname{Bl}\nolimits}
\newcommand{\Ext}{\operatorname{Ext}\nolimits}
\newcommand{\Imm}{\operatorname{Im}\nolimits}
\newcommand{\Indet}{\operatorname{Indet}\nolimits}
\newcommand{\Fix}{\operatorname{Fix}\nolimits}
\newcommand{\PAut}{\operatorname{PAut}\nolimits}
\newcommand{\Pic}{\operatorname{Pic}\nolimits}
\newcommand{\supp}{\operatorname{supp}\nolimits}
\newcommand{\tr}{\operatorname{tr}\nolimits}
\newtheorem*{thmm}{Theorem}
\newtheorem{thm}[subsection]{Theorem}
\newtheorem{df}[subsection]{Definition}
\newtheorem{lm}[subsection]{Lemma}
\newtheorem{cor}[subsection]{Corollary}
\theoremstyle{remark}
\newtheorem{remark}[subsection]{Remark}
\begin{document}

\title{\bf A singular symplectic variety of dimension 6 with a Lagrangian Prym fibration}

\author{Tommaso \textsc{Matteini}}

\date{}

\maketitle

\begin{center}
IRMA - Universit\'e de Strasbourg \\
7 rue Ren\'e Descartes, 67000 Strasbourg - France\\
Telephone: +33 (0)3 68 85 01 53 \\
E-mail address:  \texttt{matteini@unistra.fr}
\end{center}

\begin{abstract}
A projective symplectic variety $\PP$ of dimension 6, with only finite quotient singularities, $\pi(\PP)=0$ and $h^{(2,0)}(\PP_{smooth})=1$, is described as a relative compactified Prym variety of a family of genus 4 curves with involution. 
It is a \LF associated to a \kk double cover of a generic cubic surface. 
It has no symplectic desingularization. 
\end{abstract}

\begin{center}
MSC codes: 14D06, 14J40, 14B05.
\end{center}

\newpage

\section*{Introduction}

In this paper, we work in the setting of complex projective varieties. 

An \ISM is a simply connected projective manifold which has a unique holomorphic symplectic structure. 
A \LF is a surjective proper morphism with connected fibers from a projective symplectic manifold onto a  projective manifold such that the generic fiber is a connected Lagrangian submanifold, which is an abelian variety by Arnold-Liouville theorem. 
\LFs play a central role in holomorphic symplectic geometry, as they appear in all the (few) known deformation classes of irreducible symplectic manifolds. 
Moreover a surjective proper morphism with connected fibers from an \ISM onto a projective manifold of smaller positive dimension is a \LF by a theorem of Matsushita \cite{Ma}, and the base of the fibration is  a projective space by a result of Hwang \cite{Hw}. 

The problem of determining \ISMs as (compactified) families of abelian varieties over projective spaces naturally arises. 
A famous example, described by Beauville in \cite{B1}, is the relative compactified Jacobian $\JJ^H_C$ of a linear system $|C|$ on a \kk $S$ with respect to a polarization $H$ of $S$. 
It is the moduli space of $H$-semistable sheaves on $S$ of rank zero with $c_1=[C]$ and $\chi=1-g$. 
Its smooth locus inherits a symplectic structure from $S$ described by Mukai  \cite{Mu}. 
The fibration is given by the (Fitting) support map $\supp: \JJ^H_C \to |C|$. 
When the linear system contains only integral curves, then $\JJ^H_C$ is smooth and is deformation equivalent to a Hilbert scheme of points on a K3 surface.  
This gives the so called Beauville-Mukai integrable system. 
Basing on Sawon's work \cite{S2}, it seems plausible that all the \LFs which are relative compactified Jacobians of families of curves, are Beauville-Mukai integrable systems. 

In order to look for \LFs with more general abelian varieties as fibers, the next natural candidates are families of Prym varieties. 
Markushevich and Tikhomirov deal with this problem in an example \cite{MT}, suggesting the following general construction. 
Consider a \kk $S$ with an involution $\tau$ acting non-trivially on the symplectic form (a so called antisymplectic involution) and take a linear system of curves $|C|^{\tau}$ on $S$ invariant with respect to $\tau$. 
There is a natural fibration over the locus of smooth curves of $|C|^{\tau}$ given by the relative Prym variety. 
It can be interpreted as an open subset of a connected component of the fixed locus of a rational involution $\eta$ preserving the symplectic form on $\JJ^H_C$. 
Hence it inherits the symplectic structure of $\JJ^H_C$. 
When $\eta$ extends to a regular involution on $\JJ^H_C$, the connected component $\PP^H_C$ of the fixed locus is a relative compactified Prym variety. 
This is the case when $H=C$. 
Unfortunately, due to the non-generic choice of the polarization, $\JJ^C_C$ has singular points, corresponding to strictly semistable sheaves, necessarly supported on non-integral curves of $|C|$. 
Moreover, also $\PP^C_C$ is singular along $\eta$-invariant strictly semistable sheaves, necessarly supported on non-integral curves of $|C|^{\tau}$. 
It is natural to ask if there exists a desingularization of $\PP^C_C$ which is symplectic. 

In \cite{MT}, Markushevich and Tikhomirov describe a relative compactified Prym variety of dimension 4 (in their notation $\mathbf{\PP}^0$, Definition 3.3 and paragraph below \cite{MT}).  They consider as $S$ a \kk  double cover of a generic Del Pezzo surface of degree 2, as $\tau$ the corresponding involution, and as $C$ a generic $\tau$-invariant curve in the linear system of the pullback of the anticanonical divisor. 
They characterize the non-integral curves of $|C|^{\tau}$: 28 reduced reducible curves of type $C_1 \cup C_2$, where $C_1, C_2$ are smooth rational curves meeting transversely in 4 points (Lemma 1.1 \cite{MT}). 
There are 28 $\eta$-invariant strictly semistable sheaves, supported on these 28 curves. 
They describe these 28 isolated singularities of $\PP^C_C$ using the Kuranishi model of the relative Jacobian: their analytic type is $(\C^4/\pm 1,0)$ (Theorem 3.4 (i) \cite{MT}). 
As these are $\Q$-factorial and terminal singularities, $\PP^C_C$ has no symplectic desingularization. 

Nevertheless, their example is still interesting for several reasons. 
Firstly, the fibration in Prym surfaces is Lagrangian and its generic fiber has a polarization type $(1,2)$ (Theorem 3.4 (ii) \cite{MT}): no other example of this type is known. 
Secondly, the Euler characteristic of $\PP_C^C$ is calculated (see Remark  \ref{error}), which is an important topological invariant. 
Thirdly, $\PP^C_C$  is simply connected (Proposition 5.4 \cite{MT}) and its smooth locus has $h^{(2,0)}=1$ (it follows from Lemma 5.2 \cite{MT}). 
Namikawa  has proven some results in \cite{Na} which suggest to extend the theory of \ISMs to a larger class of singular symplectic varieties, in particular to $\Q$-factorial projective varieties $X$ with terminal singularities, $\pi(X)=0$ and $h^{(2,0)}(X_{smooth})=1$. 
The Beauville-Bogomolov form can still be defined (Theorem 8 (2) \cite{Na}), and the Local Torelli Theorem still holds (Theorem 8 (3) \cite{Na}). 
Following this research direction, Menet  has determined in \cite{Me} the Beauville-Bogomolov form of the example \cite{MT}. 

Due to these considerations, it is interesting to explore the relative Prym construction in other explicit cases.  
In this paper, we take inspiration from \cite{MT} to focus on a relative compactified Prym variety of dimension 6, considering as $S$ a \kk double cover of a generic Del Pezzo of degree 3 (i.e. a cubic surface), as $\tau$ the corresponding involution, and as $C$ a generic $\tau$-invariant curve in the linear system of the pullback of the anticanonical divisor. 
We denote simply by $\PP$ this relative compactified Prym variety of dimension 6 and by $\JJ$ the corresponding relative Jacobian of dimension 8. 

The main result of this paper is the following:

\begin{thmm}
Let $S$ be a \kk double cover of a generic cubic surface $Y$, $\tau$ the corresponding involution and $\pi$ the corresponding morphism. 
Let $C$ be a generic curve in $\pi^*|-K_{Y}|$. 
Then 
\begin{itemize}
\item[$i)$] $\PP$ is a singular symplectic variety of dimension 6,  which does not admit any symplectic desingularization. 
Its singular locus $Sing(\PP)$ coincides with the locus of $\eta$-invariant strictly semistable sheaves of $\JJ$, and it is the union of 27 singular \ks associated to the 27 lines on $Y$. 
Each \kk has 5 $A_1$-singularities and each singular point is in the intersection of 3 K3 surfaces. 
A smooth point of $Sing(\PP)$ is a singularity of $\PP$ of analytic type $\C^2 \times (\C^4/ \pm 1)$. 
A singular point of $Sing(\PP)$ is a singularity of $\PP$ of analytic type $\C^6 / \Z_2 \times \Z_2$, where the action of $\Z_2 \times \Z_2$ is given by $\langle (1,1,-1,-1,-1,-1),(-1,-1,1,1,-1,-1) \rangle$. 
\item[$ii)$] $\supp: \PP \to |C|^{\tau}$ is a \LF with as the generic fiber an abelian 3-fold with polarization type $(1,1,2)$.  
\item[$iii)$] $\PP$ is simply connected and $h^{(2,0)}(\PP_{smooth})=1$.
\item[$iv)$] $\chi(\PP)=2283$. 

\end{itemize}
\end{thmm}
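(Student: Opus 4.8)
The plan is to follow the strategy of Markushevich and Tikhomirov \cite{MT}, upgrading each step from the degree $2$ (four-dimensional) case to the degree $3$ (six-dimensional) case, the essential new feature being that $Sing(\PP)$ becomes two-dimensional. First I would analyse the geometry of the generic cubic surface $Y$ together with its $27$ lines. A $\tau$-invariant curve $C=\pi^*D$ fails to be integral exactly when the anticanonical curve $D\subset Y$ degenerates; since $-K_Y$ is the hyperplane class of the cubic in $\P^3$, the relevant degenerations are $D=\ell\cup Q$ with $\ell$ one of the $27$ lines and $Q$ the residual conic, whence $C=\pi^*\ell\cup\pi^*Q$. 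This yields $27$ families of reducible curves inside the base $|C|^{\tau}\cong\P^3$, one per line. By adjunction on $S$ a smooth member has $g(C)=4$ and double covers the genus-one curve $D$ branched in $6$ points, so that $\supp$ has a $3$-dimensional Prym as generic fibre.

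Next I would single out the $\eta$-invariant strictly semistable sheaves supported on these reducible curves; by the semistability analysis of \cite{MT} these constitute $Sing(\PP)$, and I would identify each of the $27$ components with a singular \kk. The surface attached to a line $\ell$ arises as a moduli space of suitable rank-one sheaves on the varying support $\pi^*\ell\cup\pi^*Q$, the modulus being essentially the residual conic $Q$ in the pencil of anticanonical curves through $\ell$. The $5$ nodes then correspond to the $5$ tritangent planes of $Y$ containing $\ell$, for which $Q$ breaks as a union $\ell'\cup\ell''$ of two further lines; at such a point the support is the totally reducible curve $\pi^*\ell\cup\pi^*\ell'\cup\pi^*\ell''$ and the sheaf lies simultaneously on the three surfaces attached to $\ell,\ell',\ell''$, which explains why each node is the meeting point of three K3 surfaces. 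Since each line lies in exactly $5$ tritangent planes and each plane is shared by $3$ lines, the incidence geometry of the $27$ lines governs the whole configuration of $Sing(\PP)$.

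The technical heart of the proof, and the step I expect to be the main obstacle, is the determination of the local analytic type of $\PP$ along $Sing(\PP)$. Here I would use the Kuranishi model of the relative Jacobian $\JJ$ at a strictly semistable sheaf $F$, that is, the germ described through $\Ext^1(F,F)$ with the quadratic obstruction valued in $\Ext^2(F,F)$, together with the action induced by $\eta$ and by the automorphisms of $F$. Passing to $\eta$-invariants and taking the relevant quotient, at a smooth point of $Sing(\PP)$ the computation should yield the germ $\C^2\times(\C^4/\pm 1)$, the factor $\C^2$ being tangent to $Sing(\PP)$ and $\C^4/\pm 1$ being the transverse model already met in \cite{MT}. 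At a node, where three surfaces meet, the automorphism group grows and the model becomes $\C^6/\Z_2\times\Z_2$ with the stated generators. Each of these germs is $\Q$-factorial and terminal: the involution acts as $\tfrac12(1,1,1,1)$ on the transverse $\C^4$, and the $\Z_2\times\Z_2$-action has no quasi-reflections while every non-trivial element has age $2$, so the Reid--Tai criterion applies. Hence $\PP$ admits no crepant, and a fortiori no symplectic, desingularization, proving $(i)$.

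For $(ii)$ I would transport the Mukai symplectic form from $\JJ_{smooth}$ to $\PP_{smooth}$, using that $\eta$ preserves it and that $\PP$ is a connected component of $\Fix(\eta)$; the fibres of $\supp$ are Lagrangian because they are cut out inside the Lagrangian fibres of $\JJ\to|C|$, and the polarization type $(1,1,2)$ would follow from restricting the principal polarization of $\JJ(C)$ to the Prym of the ramified double cover $C\to D$. For $(iii)$ I would adapt \cite[Prop.\ 5.4, Lem.\ 5.2]{MT}: simple connectedness from the fibration structure over $\P^3$ and the simple connectedness of $\JJ$, and $h^{(2,0)}(\PP_{smooth})=1$ from the uniqueness of the symplectic form read on $H^2$ of a smooth model. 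Finally $(iv)$ is a stratified Euler-characteristic computation over $\P^3$: since a smooth Prym is an abelian threefold with vanishing Euler number, the entire contribution to $\chi(\PP)=2283$ comes from the degenerate fibres over the discriminant surface, whose fibrewise Euler numbers I would determine from the local models of the second and third steps and from the incidence of the $27$ lines. This last bookkeeping is lengthy but, unlike the singularity analysis, essentially routine once the stratification is in place.
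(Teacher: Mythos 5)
Your outline for parts $(i)$, $(ii)$ and $(iv)$ follows essentially the same route as the paper: the classification of non-integral members of $\pi^*|-K_Y|$ via the $27$ dual lines and the $45$ tritangent points, the identification of $Sing(\PP)$ with $27$ singular K3 surfaces (the relative compactified Jacobians $\overline{J^{-2}}^{C}$ of the elliptic pencils through the lines) whose $5$ nodes sit over the tritangent planes, the Kuranishi-model computation of the two local types, terminality and $\Q$-factoriality of the transverse $\tfrac12(1,1,1,1)$ germ, Mumford's polarization count $(1,1,2)$ for the Prym of a ramified double cover of an elliptic curve, and the stratified Euler-characteristic computation in which only the fibres with zero-dimensional strata contribute. (Two small caveats there: you should make explicit the input from Beauville that $\chi$ of a compactified Jacobian/Prym equals the number of zero-dimensional strata, which is what kills all but four strata; and the counts $864$ and $378$ are not routine --- they require Pl\"ucker formulas for the bitangents of a projection of $B$ and a Riemann--Hurwitz computation on a curve of genus $67$ inside $B\times B$.)

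The genuine gap is in part $(iii)$. You propose to get simple connectedness ``from the fibration structure over $\P^3$ and the simple connectedness of $\JJ$,'' and $h^{(2,0)}=1$ ``from the uniqueness of the symplectic form.'' Neither mechanism works as stated: $\PP$ is a closed subvariety (a component of $\Fix(\eta)$) of $\JJ$, and subvarieties of simply connected varieties need not be simply connected; a Lagrangian fibration with degenerate fibres does not by itself control $\pi_1$ of the total space; and ``uniqueness of the symplectic form'' is precisely the statement $h^{(2,0)}=1$ that has to be proved, so the argument is circular. What the paper actually does --- and what ``adapting [MT]'' really amounts to --- is to construct a dominant rational $2{:}1$ map $\psi\colon S^{[3]}\dashrightarrow \PP$, $\xi\mapsto(1-\tau)\xi$, identify its covering involution with $\iota_2=\iota_1\circ\tau$ on the blow-up of $S^{[3]}$ along the $\P^3$ of lines in the quadric $Z_2$ (where $\iota_1$ is O'Grady's Beauville-type involution $\xi\mapsto\langle\xi\rangle\cap S-\xi$), and verify by a case analysis of the linear systems $|\xi+\tau(\xi')|$ on the genus-$4$ curve $C_\xi$ that the generic fibre of $\psi$ has exactly two points. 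Only then do $h^{(2,0)}(\PP)=h^{(2,0)}(S^{[3]})^{\iota_2}=1$ and $\pi_1(\PP)=\pi_1(\Bl(S^{[3]})/\iota_2)=\pi_1(S^{[3]})=1$ (the latter because $\iota_2$ has fixed points) follow. This birational model is a substantive construction occupying an entire section of the paper, and your proposal contains no replacement for it.
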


\begin{proof}
\begin{itemize}
\item[$i)$] Corollary \ref{nosympl}, Theorem \ref{singgen}, Theorem \ref{singpar}, Corollary \ref{singloc}. 
\item[$ii)$] Theorem \ref{Psympl}. 
\item[$iii)$] Corollary \ref{h20} and Theorem \ref{pi1}.  
\item[$iv)$] Theorem \ref{ecchila}. 
\end{itemize}
\end{proof}

In Section 1 we describe the \kk double cover of the generic cubic surface, and we determine the non-integral curves of the pullback of the anticanonical linear system. 
In Section 2 we define the relative compactified Prym variety $\PP$ associated to the anticanonical linear system of the generic cubic surface. 
In Section 3 we characterise the singular locus and the singularities of $\PP$, using the Kuranishi map. 
We show that $\PP$ has terminal $\Q$-factorial singularities, which implies that it does not admit any symplectic resolution. 
In Section 4 we prove that $\PP$ is simply connected and the $H^{(2,0)}$ of its smooth locus is generated by the symplectic form, essentially constructing a rational double cover map from $S^{[3]}$ to $\PP$. 
In Section 5 we compute the Euler characteristic of $\PP$, using the fibration structure.  

\section*{Acknowledgements} 
I thank Dimitri Markushevich, my PhD supervisor, for all his help. 
I worked on this paper at the International School of Advanced Studies of Trieste, at the Paul Painlev\'e Laboratory of the University of Lille1, at the Max Planck Institute for Mathematics of Bonn and at the Institute of Advanced Mathematical Research of the University of Strasbourg: I thank these institutions for their hospitality. 
I also acknowledge a partial support by the Research Network Program GDRE-GRIFGA and by the Labex CEMPI (ANR-11-LABX-0007-01).

\section{\kk double cover of a generic cubic surface}

In this section we describe the \kk $S$ double cover of the generic cubic surface $Y$, and we determine the non-integral curves of $\pi^*|-K_{Y}|$, the pullback of the anticanonical linear system of $Y$.

$\\ $

\begin{lm}\label{sy3}
Let $S$ be a \kk double cover of a generic cubic surface $Y$, $\tau$ the corresponding involution and $\pi$ the corresponding morphism. 

Then $\pi^*(-K_{Y})$ embeds $S$ in $\P^4$ as the intersection of a quadric 3-fold 
\begin{align}\label{y3qu}
Z_2: F_2=x_4^2+f_2=0 \mbox{ with } f_2 \in \C[x_0,x_1,x_2,x_3]_2
\end{align}
and a cubic cone with vertex $p_0=(0,0,0,0,1)$ 
\begin{align}
\label{y3cu}
Z_3: F_3=0 \mbox{ with } F_3 \in \C[x_0,x_1,x_2,x_3]_3.
\end{align}
Moreover $\tau$ is given by $x_4 \mapsto -x_4$, $\pi$ is the restriction of the projection from the point $p_0$ onto the hyperplane $H_4: x_4=0$ and $Y=Z_3 \cap H_4$. 
\end{lm}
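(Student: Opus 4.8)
The plan is to realise $S$ concretely as the double cover of $Y$ branched along a generic smooth divisor $B \in |-2K_Y|$ and to analyse the linear system $|\pi^*(-K_Y)|$ by hand. Writing $L = -K_Y$, so that $B \in |2L|$ and $\pi_* \OO_S = \OO_Y \oplus L^{-1}$, I would first confirm that $S$ is a \kk: the ramification formula gives $K_S = \pi^*(K_Y + L) = 0$, while $h^1(\OO_S) = h^1(\OO_Y) + h^1(L^{-1}) = 0$ because $Y$ is rational and $L^{-1} = K_Y$ has no $H^1$ on a del Pezzo surface.

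The heart of the computation is the projection formula: for any line bundle $M$ on $Y$ one has $H^0(S, \pi^* M) = H^0(Y, M) \oplus H^0(Y, M \otimes L^{-1})$, where the first summand is $\tau$-invariant and the second $\tau$-anti-invariant. Taking $M = L$ gives $H^0(S, \pi^*(-K_Y)) = H^0(Y, -K_Y) \oplus H^0(Y, \OO_Y)$, of dimension $4 + 1 = 5$, so that $|\pi^*(-K_Y)|$ maps $S$ into $\P^4$. I would then take $x_0, \dots, x_3$ to be a basis of the invariant part $H^0(Y, -K_Y)$, i.e. the pullbacks of the coordinates of the anticanonical embedding $Y \hookrightarrow \P^3$, and $x_4$ a generator of the anti-invariant line $H^0(Y, \OO_Y)$. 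Since $\tau^*$ acts by $-1$ on that line we get $\tau\colon x_4 \mapsto -x_4$, and forgetting the unique anti-invariant coordinate recovers the quotient map, so $\pi$ is the projection from $p_0 = (0,0,0,0,1)$ onto $H_4 = \{x_4 = 0\}$.

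Next I would extract the two equations. As $x_4$ is anti-invariant, $x_4^2$ is an invariant section of $\pi^*(2L)$ and hence lies in $H^0(Y, -2K_Y)$; by construction of the cyclic cover this invariant section is, up to scalar, the pullback of the section cutting out $B$. Because $Y$ is a cubic surface the restriction $H^0(\P^3, \OO(2)) \to H^0(Y, \OO_Y(2))$ is an isomorphism (no quadric contains $Y$), so $B$ is cut on $Y$ by a quadric $f_2 \in \C[x_0,\dots,x_3]_2$ and one obtains $F_2 = x_4^2 + f_2 = 0$, the quadric threefold $Z_2$. The second relation is just the pullback of the cubic $F_3$ defining $Y \subset \P^3$: the invariant coordinates $x_0,\dots,x_3$ satisfy $F_3 = 0$, so $S$ lies on the cubic cone $Z_3 = \{F_3(x_0,\dots,x_3) = 0\}$ with vertex $p_0$, and $Y = Z_3 \cap H_4$ is immediate.

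Finally I would show that $|\pi^*(-K_Y)|$ is very ample and that $S$ is the full complete intersection $Z_2 \cap Z_3$. Points in distinct fibres of $\pi$ are separated already by $x_0,\dots,x_3$ through the embedding of $Y$, while the two points over a non-branch point are separated by the sign of $x_4$; an immersion check along the ramification, where $x_4^2 = -f_2$ and $B$ is smooth, finishes very ampleness. The image is then a smooth degree-$6$ surface, and since $(\pi^*L)^2 = 2(-K_Y)^2 = 6 = \deg(Z_2 \cap Z_3)$, the inclusion $S \subseteq Z_2 \cap Z_3$ is an equality. The hard part will be exactly this last step: ensuring, for generic cubic $Y$ and generic branch divisor $B$, that $|\pi^*(-K_Y)|$ is very ample and that $Z_2 \cap Z_3$ is reduced, irreducible and smooth, rather than degenerating along the cone vertex $p_0$ or along the ramification locus. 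This is precisely where the genericity hypotheses enter.
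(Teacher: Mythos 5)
Your argument is correct, but it takes a genuinely different route from the paper's. The paper disposes of the lemma with a one-line parameter count: by Ma's results on $2$-elementary K3 surfaces (\cite{M}), the moduli space of K3 double covers of degree-$3$ Del Pezzo surfaces has dimension $13$, and since $\dim|\OO_{\P^3}(2)|+\dim|\OO_{\P^3}(3)|-\dim PGL(4)=9+19-15=13$, the family of intersections $Z_2\cap Z_3$ of the stated form accounts for the generic member of that moduli space. Your proof is instead the direct, constructive one: realise $S$ as the cyclic double cover with $\pi_*\OO_S=\OO_Y\oplus K_Y$ branched in $B\in|-2K_Y|$, compute $H^0(S,\pi^*(-K_Y))=H^0(Y,-K_Y)\oplus H^0(Y,\OO_Y)=\C^4\oplus\C$ together with its $\tau$-eigenspace decomposition, read off $x_4^2=-f_2$ from the tautological section of the cover (using that $H^0(\P^3,\OO(2))\to H^0(Y,-2K_Y)$ is an isomorphism of $10$-dimensional spaces) and $F_3=0$ from $Y\subset\P^3$, and finish with very ampleness plus the degree count $(\pi^*(-K_Y))^2=6=\deg Z_2\cdot\deg Z_3$. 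What your approach buys: it is self-contained (no appeal to the moduli dimension), it makes the formulas for $\tau$ and $\pi$ transparent rather than implicit in a genericity statement, and it in fact proves the claim for \emph{every} K3 double cover of \emph{every} smooth cubic surface (smoothness of $S$ already forces $B$ to be smooth, which is all you use), not merely for a generic one. What the paper's argument buys is brevity, at the cost of relying on the external computation of the moduli dimension and of only controlling the generic member. Two reassurances about the step you flag as hard: the vertex satisfies $F_2(p_0)=1\neq 0$, so $p_0\notin Z_2\cap Z_3$ and no degeneration at the vertex can occur; and once very ampleness is established, the intersection $Z_2\cap Z_3$ is proper (since $p_0\in Z_3\setminus Z_2$ shows $Z_3\not\subset Z_2$), so B\'ezout forces the degree-$6$ smooth image of $S$ to exhaust $Z_2\cap Z_3$ with its reduced structure --- you do not need to verify smoothness or irreducibility of $Z_2\cap Z_3$ independently.
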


\begin{proof}
By the general theory of \ks with antisymplectic involutions, the moduli space of \ks double cover of Del Pezzo surfaces of degree 3 has dimension $13$ (see Section 4 or Subsection 5.6 \cite{M}). 
Hence a \kk intersection of (\ref{y3qu}) and (\ref{y3cu}) represents a generic point of the moduli space, because
$$\dim |\OO_{\P^3}(2)|+\dim |\OO_{\P^3}(3)|-\dim PGL(4)=13.$$
\end{proof}

\begin{lm}\label{blabla}
Let $S$ be a \kk double cover of a generic cubic surface $Y$. 
Then the non-integral curves of $\pi^*|-K_{Y}|$ are parametrized by the 27 lines dual to the 27 lines on $Y$. 
The points lying on only one dual line represent reduced curves of type $C_1 \cup C_2$, where $C_i$ are smooth curves of genus respectively 0 and 1 intersecting transversely in 4 points. 
The remaining 45 points represent reduced curves of type $C^1 \cup C^2 \cup C^3$, where $C^i$ smooth rational curves, intersecting transversely in pairs in 2 points.  
\end{lm}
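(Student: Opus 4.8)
The plan is to realise every member of $\pi^*|-K_Y|$ as a double cover of a plane cubic and to trace the non-integrality of the curve upstairs back to a degeneration of that plane cubic. First I would identify the linear system explicitly. Since $-K_Y=\OO_Y(1)$ and, by Lemma \ref{sy3}, $\pi$ is the projection from $p_0$ onto $H_4$, the members of $\pi^*|-K_Y|$ are exactly the sections $D_\ell:=S\cap\{\ell=0\}$ cut out by linear forms $\ell\in\C[x_0,x_1,x_2,x_3]_1$, parametrised by $\ell\in(\P^3)^*\cong\P^3$. Each such hyperplane meets $H_4$ in a plane $\Pi\subset\P^3$, and $\pi$ restricts to a double cover $D_\ell\to\Gamma:=Y\cap\Pi$ of the plane cubic $\Gamma$, branched over $\Gamma\cap B$, where $B=Y\cap\{f_2=0\}$ is the branch curve of $\pi$. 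As $B$ is cut on the cubic $Y$ by the quadric $f_2$, the branch divisor $\Gamma\cap B$ consists of $6$ points for every $\Pi$, so the cover is always genuinely ramified.

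Next I would reduce the problem to the geometry of plane sections of $Y$. Because $\Gamma\cap B\neq\emptyset$, the cover $D_\ell\to\Gamma$ is ramified, hence $D_\ell$ is connected; and for generic $S$ one checks that $\Gamma$ is reduced, since a non-reduced plane section would force $\Pi$ tangent to $Y$ along a curve, which does not occur for a generic $Y$. Consequently an integral $\Gamma$ yields an integral $D_\ell$, and $D_\ell$ is non-integral precisely when $\Gamma$ is reducible. By the classical theory of the $27$ lines, a plane $\Pi$ meets $Y$ in a reducible cubic exactly when $\Pi$ contains a line $L\subset Y$; a plane contains either $0$, $1$, or $3$ of the $27$ lines, and it contains $3$ of them exactly at the $45$ tritangent planes. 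In the dual $\P^3$ the planes through a fixed line $L$ form a line $\check L$, giving the $27$ dual lines; each $\check L$ contains $5$ of the $45$ triple points (the $5$ tritangent planes through $L$), and these triple points are precisely the pairwise intersections of the dual lines. This yields the stated parametrisation.

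Finally I would analyse the two cases via Riemann-Hurwitz. For a generic $\ell\in\check L$ one has $\Gamma=L\cup Q$ with $Q$ a smooth conic meeting $L$ transversely in $2$ points. The line $L$ carries the $2$ branch points $L\cap\{f_2=0\}$, so $C_1:=\pi^{-1}(L)$ is a double cover of $\P^1$ branched at $2$ points, hence $C_1\cong\P^1$ has genus $0$; the conic $Q$ carries the $4$ branch points $Q\cap\{f_2=0\}$, so $C_2:=\pi^{-1}(Q)$ has genus $1$. Provided the two nodes of $\Gamma$ avoid $B$, each lifts to $2$ reduced points, so that $C_1\cap C_2$ consists of $4$ transverse points. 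At a tritangent plane $\Gamma=L_1\cup L_2\cup L_3$, each $C^i:=\pi^{-1}(L_i)$ is a rational curve by the same computation, and the three vertices of the triangle lift so that $C^i\cap C^j$ consists of $2$ transverse points for each pair.

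I expect the main obstacle to lie in this last step, namely the genericity bookkeeping: one must verify that for a generic $S$ the nodes of every degenerate $\Gamma$, and in the triangle case the three vertices, lie off the branch quadric $\{f_2=0\}$, so that the local structure at the intersections is exactly as claimed, and that the residual conics $Q$ stay smooth and transverse to $L$ away from the $5$ tritangent points of each $\check L$. The remaining ingredients (the reduction to reducibility of $\Gamma$ and the invocation of the $27$ lines and $45$ tritangent planes) are standard once set up.
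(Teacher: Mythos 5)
Your reduction to plane sections of $Y$, the identification of the $27$ dual lines and the $45$ tritangent planes, and the Riemann--Hurwitz computation of the genera of the components over $L\cup Q$ and $L_1\cup L_2\cup L_3$ all match the paper's argument (the paper leaves the genus bookkeeping to the reader, so your third paragraph is a reasonable expansion). The genuine gap is the sentence ``Consequently an integral $\Gamma$ yields an integral $D_\ell$.'' What the nonemptiness of the branch divisor gives you is that $D_\ell$ is \emph{connected}; connectedness is not integrality, and this implication is exactly where the lemma has nontrivial content. If $\Pi$ is one of the (finitely many, in fact $120$) planes totally tangent to the branch sextic $B$, then $\Gamma\cap B=2D$ for a degree-$3$ divisor $D$; over each tangency point the local equation of $D_\ell$ is $x_4^2=u^2\cdot(\mathrm{unit})$, i.e.\ a node with two analytic branches, and globally $D_\ell$ splits into two copies of $\Gamma$ glued along $D$ precisely when $\OO_\Gamma(D)\cong\OO_\Gamma(1)$, i.e.\ when the three tangency points are collinear. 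Such a $D_\ell$ would be connected, reduced, of arithmetic genus $4$, and reducible, and nothing in your argument excludes it. Note this is not the ``genericity bookkeeping'' you flag as the expected obstacle: it is a separate degeneration living over integral plane sections.

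The paper closes exactly this gap with a short projective argument: if $D_\ell=C_1\cup C_2$ with each $C_i$ mapping isomorphically to $\Gamma$, then each $C_i$ is a degree-$3$ curve of (arithmetic) genus $1$ lying on the quadric surface $Z_2\cap\langle H,p_0\rangle\subset\P^3$, whereas a genus-$1$ curve on a quadric surface has class $(2,2)$ and hence degree $4$ --- a contradiction. You need this step (or an equivalent one, e.g.\ showing directly that for no tritangent plane of $B$ are the three tangency points cut out by a line) to complete the proof; with it added, your argument coincides with the paper's.
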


\begin{proof} 
The non-integral curves of $|-K_Y|$ are parametrized by the 27 lines dual to the 27 lines on $Y$. 
The points lying on only one dual line represent curves which are unions of a conic and a line. 
The remaining points, which are 45 by the configuration of the 27 lines (see V.4 \cite{Ha}), are unions of 3 lines. 
The description of the corresponding curves of $\pi^*|-K_{Y}|$ follows immediately. 

It remains to exclude the existence of other non-integral curves in $\pi^*|-K_{Y}|$. 
If $C'$ is an irreducible curve of $|-K_Y|$ such that the corresponding curve $C$ in $\pi^*|-K_{Y}|$ is reducible, then $C'$ is cut by a hyperplane $H$ totally tangent to the branch locus $B$ of $\pi$. 
Since the arithmetic genus of $C$ is 4, $C$ is the union of two smooth curves of genus 1 meeting in 3 points. 
But $C= S \cap \langle H,p_0\rangle \subset Z_2 \cap \langle H,p_0 \rangle$, the latter intersection being a quadric in $\P^3$. 
A smooth genus 1 curve is in the linear system $|\OO(2)|$ on a quadric surface, while $C$ belongs to $|\OO(3)|$, absurd.

\end{proof}

\section{Construction of $\PP$}

In this section we define the main subject of the paper: the relative compactified Prym variety $\PP$ associated to the anticanonical linear system of the generic cubic surface.

$\\ $

We start by recalling the notion of Prym variety. 

\begin{df}
Let $\xymatrix{ C \ar@(dl,ul)[]^{\tau} \ar[r] & C'}$ be a double cover of smooth curves $C, C'$ of genus $g, g'$ respectively. 
The Prym variety $P(C,\tau)$ of the double cover is the connected component $\Fix^0(-\tau^*)
 \subset J(C)$ of the fixed locus of $-\tau^*$ containing zero. 
It inherits a natural polarization from $J(C)$, the restriction of $\Theta_C$.
\end{df}

From now on we fix a  \kk $S$ double cover of a generic cubic surface $Y$, with corresponding involution $\tau$, and a generic curve $C$ in $\pi^*|-K_{Y}|$.  
Let $\JJ:=\JJ_C^C$ be the relative compactified Jacobian of $|C|$, which is the moduli space of $C$-semistable sheaves of rank $0$, first Chern class $[C] \in H^2(S,\Z)$ and Euler characteristic $-3$. 

In order to define $\PP$, we introduce the relative version of the involutions $-1$ and $\tau^*$. 

\begin{lm}\label{taustar}
$\tau$ induces a regular involution $\tau^*$ on $\JJ$. 
\end{lm}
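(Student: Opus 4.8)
The plan is to realise $\tau^*$ as the automorphism of the moduli space $\JJ$ induced by pulling sheaves back along the automorphism $\tau$ of $S$, and then to verify that this operation preserves all the data entering the definition of $\JJ$. The starting observation, which makes everything fiberwise, is that the whole linear system is \emph{pointwise} fixed by $\tau$: since $\tau$ is the deck transformation of the double cover, $\pi \circ \tau = \pi$, so for every $D \in |-K_Y|$ the curve $\pi^*D$ satisfies $\tau(\pi^*D)=\pi^*D$ as a subscheme of $S$. Hence $\tau$ restricts to an involution on each member of $|C|=\pi^*|-K_Y|$ and fixes every point of the base $|C|\cong\P^3$; in particular $\tau^*[C]=[C]$ in $H^2(S,\Z)$, so the polarization class defining stability is $\tau$-invariant.

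Next I would check that the assignment $\FF \mapsto \tau^*\FF$ (equivalently $\tau_*\FF$, since $\tau$ is an involution) sends points of $\JJ$ to points of $\JJ$. Because $\tau$ is an isomorphism, $\tau^*$ is an exact autoequivalence of $\mathrm{Coh}(S)$: it preserves purity and the dimension of support, it carries a sheaf supported on $C_0\in|C|$ to one supported on $\tau(C_0)=C_0$, it preserves the rank and the first Chern class $[C]$ by the previous step, and it preserves the Euler characteristic, so $\chi(\tau^*\FF)=\chi(\FF)=-3$. Since the polarization $C$ is $\tau$-invariant, reduced Hilbert polynomials are unchanged, and therefore $\tau^*$ takes $C$-semistable (resp. $C$-stable) sheaves to $C$-semistable (resp. $C$-stable) sheaves. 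Thus $\tau^*$ preserves both the prescribed numerical invariants and the stability condition, and defines a set-theoretic involution of $\JJ$ lying over $\mathrm{id}_{|C|}$.

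To upgrade this to a \emph{regular} involution I would appeal to the moduli-theoretic construction of $\JJ$. Pullback along $\tau$ takes a flat family of semistable sheaves with the prescribed invariants to a flat family of the same type, hence induces a natural transformation of the moduli functor (coarsely) represented by $\JJ$; by the universal property this yields a morphism $\tau^*\colon\JJ\to\JJ$, and $\tau^2=\mathrm{id}_S$ gives $(\tau^*)^2=\mathrm{id}_{\JJ}$. The hard part is exactly this last step, namely knowing the map is a morphism everywhere rather than merely a set-theoretic bijection or a rational map: this is where one must use that $\tau$ is a genuine regular automorphism of $S$ (not just birational) and that it fixes both the numerical invariants and the ample class $[C]$, so that no semistability jumps can be created by the action. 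Granting the functorial construction of $\JJ$ as a coarse (or fine) moduli space of $C$-semistable sheaves, regularity of $\tau^*$ follows formally, completing the proof.
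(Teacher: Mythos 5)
Your proposal is correct and is essentially a detailed expansion of the paper's one-line proof, which likewise reduces everything to the $\tau$-invariance of the polarization $C$: invariance of the numerical data and of $C$-semistability under $\tau^*$, plus functoriality of the moduli construction, is exactly what the paper invokes. One side remark in your write-up is inaccurate, though harmless for the lemma: the full linear system is $|C|\cong\P^4$ (which is why $\JJ$ has dimension $8$), and $\tau^*$ does \emph{not} fix it pointwise --- only the subsystem $\pi^*|-K_Y|\cong\P^3$ is pointwise fixed --- so the induced involution on $\JJ$ does not lie over $\mathrm{id}_{|C|}$; none of this is needed, since the argument only uses $\tau^*[C]=[C]$ and the $\tau$-invariance of the polarization.
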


\begin{proof}
This follows directly from the fact that the polarization $C$ is $\tau$-invariant.
\end{proof}

\begin{lm}\label{defj}
Let $j$ be the involution of $\JJ$ defined by 
$$j(\FF):=\EE xt^1_S(\FF,\OO _S(-C)).$$ 
Then
\begin{itemize}
\item[$i)$] $j$ coincides with $-1$ fiberwise over the locus of smooth curves of ${\pi^*|-K_{Y}|}$;
\item[$ii)$] $j$ is a regular involution.
\end{itemize} 
\end{lm}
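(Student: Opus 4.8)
The plan is to treat the two statements separately, deriving $(i)$ from relative Serre duality on the curve and $(ii)$ from the general formalism of the dualizing functor on pure one-dimensional sheaves.

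For $(i)$, I would restrict attention to a smooth curve $D\in\pi^*|-K_{Y}|$ and a point of $\JJ$ over $D$, which has the form $\FF=i_*L$ for $i\colon D\hookrightarrow S$ the inclusion and $L$ a line bundle of degree $0$ on $D$ (indeed $\chi(i_*L)=\deg L+1-g=-3$ forces $\deg L=0$, since $g=4$). As $i_*L$ is a torsion sheaf, $\mathcal{H}om_S(i_*L,\OO_S)=0$, so the only nonvanishing local Ext sits in degree one, and relative Serre duality for the divisor $i\colon D\hookrightarrow S$ gives $\EE xt^1_S(i_*L,\OO_S)\cong i_*(L^{\vee}\otimes\omega_{D/S})$. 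Since $\omega_S=\OO_S$, the relative dualizing sheaf is $\omega_{D/S}=\omega_D=\OO_D(C)$ by adjunction. Pulling the twist by $\OO_S(-C)$ inside via the projection formula and cancelling $\OO_D(C)\otimes\OO_D(-C)=\OO_D$, I obtain $j(i_*L)=i_*(L^{\vee})=i_*L^{-1}$, which is exactly the $-1$ map on $\Pic^0(D)$. Note that the twist by $\OO_S(-C)$ is precisely what makes $\omega_D$ cancel, and it is also what keeps $\chi=-3$ fixed fiberwise.

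For $(ii)$, I would argue in three steps. First, well-definedness: the dualizing functor $\mathbb{D}(\FF):=\EE xt^1_S(\FF,\OO_S(-C))$ sends a pure one-dimensional sheaf to a pure one-dimensional sheaf supported on the same divisor, so $c_1$ and the Fitting support are preserved (whence $\supp\circ j=\supp$); a Riemann--Roch computation, already confirmed by the smooth case above, shows $\chi$ is preserved, and the dual of a $C$-semistable sheaf is again $C$-semistable by the standard behaviour of the dualizing functor with respect to Gieseker stability. Second, $j$ is a morphism: I would realize it in families by forming the relative $\EE xt^1$ of a flat family of sheaves over a base (for instance an \'etale-local universal family on $\JJ$) against the pullback of $\OO_S(-C)$; checking that this relative Ext commutes with base change yields a flat family of semistable sheaves, which by corepresentability of the moduli functor induces the regular map $j$. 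Third, $j^2=\mathrm{id}$: biduality for pure codimension-one sheaves provides a canonical isomorphism $\mathbb{D}^2\cong\mathrm{id}$, so $j$ is an involution.

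The main obstacle is the second step of $(ii)$: showing that $j$ is everywhere regular, not merely a rational or set-theoretic involution, across the reducible and non-integral curves and at the strictly semistable sheaves where no global universal family exists. The crux is the base-change control for the relative $\EE xt^1$, namely verifying that the local Ext sheaves commute with base change so that the output is genuinely a flat family, after which the universal property of the moduli functor delivers the morphism. The purity, biduality and semistability-preservation inputs are standard facts about the dualizing functor, but they must be applied with care at the non-integral points, where $\FF$ can fail to be locally free on its support.
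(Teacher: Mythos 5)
Your argument is correct and follows essentially the same route as the paper: in both cases $j$ is identified with the dualizing functor twisted by $\OO_S(-C)$, acting as $L\mapsto L^{-1}$ on line bundles over smooth curves, and part $(ii)$ reduces to checking that $c_1$, $\chi$ and $C$-semistability are preserved. The differences are minor: the paper proves the duality in $(i)$ by a direct local $\EE xt^1$ computation from $0\to\OO_S(-C)\to\OO_S\to\OO_C\to 0$ (gluing local isomorphisms $\FF|_{V_i}\cong\OO_C|_{V_i}$) rather than invoking relative Serre duality, and for semistability it writes out the correspondence between subsheaves of $\FF$ and quotients of $j(\FF)$ together with the explicit slope formula $\mu_C(j(\GG))=-\mu_C(\GG)-1$ where you cite the standard behaviour of the dualizing functor, while your added care about base change for the relative $\EE xt^1$ and about biduality addresses points the paper leaves implicit.
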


\begin{proof}
\begin{itemize}
\item[$i)$] 
Since $C$ is generic, $-1$ is defined as $\HH om_S(\_,\OO _C)$ on $J(C)$, which is the fiber of $\supp$ over the point $C$. 

We prove that $\EE xt^1_S(\_,\OO _S(-C)) \cong \HH om_S(\_,\OO _C)$.

Let $\{V_i\}$ be an open covering of $S$ such that local isomorphisms $\FF|_{V_i} \cong \OO_C|_{V_i}$ hold. 
Applying the functor $\HH om_S(\_,\OO_S(-C))$ to the short exact sequence 
$$0 \to \OO_S(-C) \to \OO_S \to \OO_C \to 0,$$
we get, from the definition of $\EE xt^1$, the canonical isomorphisms
$$\HH om_{S}(\OO_C|_{V_i},\OO_C|_{V_i})=\EE xt^1_{S}(\OO _C|_{V_i},\OO _S(-C)|_{V_i}).$$
We conclude by gluing together these isomorphisms.

\item[$ii)$] First we observe that $c_1(j(\FF))=[C]$ and $\chi(j(\FF))=-3$ for any $\FF \in \JJ$. 

Indeed, $c_1(\EE xt^1_S(\FF,\OO_S))=c_1(j(\FF))$, because tensoring by a line bundle does not change the first Chern class of a $1$-dimensional sheaf, and  $c_1(\EE xt^1_S(\FF,\OO_S))=c_1(\FF). \\ $ 
Moreover 
\begin{equation}\label{jff}
j(\FF)=\FF^* \otimes \OO_S(-C) \otimes \NN_{C/S}, \quad
\NN_{C/S}=\OO_S(C)|_C=\omega_C
\end{equation}
thus by the Hirzebruch-Riemann-Roch theorem,
\begin{equation}\label{chi}
\chi(j(\FF))=
-\chi(\FF)-C \cdot c_1(\FF)
\end{equation}
and we conclude.

Then we show that $j$ preserves the $C$-semistability.

Applying $j$ to the exact sequence $0 \to \GG \to \FF$, we obtain $j(\FF) \to j(\GG) \to 0$. 
Hence there is a 1-1 correspondence between subsheaves of $\FF$ and quotient sheaves of $j(\FF)$. 
Moreover, by (\ref{jff}) and (\ref{chi}) applied to $\GG$ instead of $\FF$, we get
\begin{equation}\label{chiave}
\mu_C(j(\GG)):=\frac{\chi(j(\GG))}{c_1(j(\GG)) \cdot C}= 
 - \mu_C(\GG)- 1. 
\end{equation}
Thus $\mu_C(j(\FF)) \leq \mu_C(j(\GG))$ is equivalent to $\mu_C(\GG) \leq \mu_C(\FF)$.
\end{itemize} 
\end{proof}

\begin{df}\label{defprym}
The relative Prym variety associated to $C$ is 
\begin{equation*}
\PP:=\Fix^0(\eta) \subset \JJ,
\end{equation*} 
the connected component, containing the zero section, of the fixed locus of 
\begin{equation*}
\eta:=j \circ \tau^* \quad \xymatrix{ \JJ \ar@(dl,ul)}
\end{equation*}
where $j$ is defined in Lemma \ref{defj} and $\tau^*$ in Lemma \ref{taustar}.
\end{df}

The reason why we consider \ks is that the stable locus of $\JJ$ is symplectic by a result of Mukai.

\begin{thm}[Theorem 0.1 \cite{Mu}]\label{Muk}
The stable locus of $\JJ$ is smooth and has a symplectic form, given pointwise at a stable sheaf $\FF$ by 
\begin{equation}\label{Mukai}
\xymatrix{
\Ext^1(\FF,\FF) \times \Ext^1(\FF,\FF) \ar[r]^{\qquad \cup} &  \Ext^2(\FF,\FF) \ar[r]^{\quad tr} & H^2(\OO) \ar[r]^{\cdot / \sigma} & \C.}
\end{equation}
\end{thm}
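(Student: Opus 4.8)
The plan is to prove the statement in three stages — smoothness of the stable locus, construction of a non-degenerate alternating $2$-form, and closedness of that form — using throughout that $S$ is a \kk, so that $K_S \cong \OO_S$ and Serre duality reads $\Ext^i(\FF,\GG) \cong \Ext^{2-i}(\GG,\FF)^*$, while a generator $\sigma$ of $H^0(S,\Omega^2_S) \cong H^2(S,\OO_S)^*$ trivialises the line $H^2(S,\OO_S)$.

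First I would settle smoothness by deformation theory. Stability forces $\FF$ to be simple, so $\Hom(\FF,\FF) = \C$; the tangent space to the moduli functor at $[\FF]$ is $\Ext^1(\FF,\FF)$ and the obstructions lie in $\Ext^2(\FF,\FF)$. The trace maps $\tr \colon \Ext^i(\FF,\FF) \to H^i(S,\OO_S)$ split off the trace-free summands $\Ext^i_0(\FF,\FF) = \ker \tr$, and a standard computation (the obstruction is a Yoneda/Massey expression whose trace measures the obstruction to deforming $\det \FF$) shows that the obstruction class lies in $\Ext^2_0(\FF,\FF)$. Now Serre duality gives $\Ext^2(\FF,\FF) \cong \Hom(\FF,\FF)^* \cong \C$, and the degree-$2$ trace $\Ext^2(\FF,\FF) \to H^2(S,\OO_S)$, being Serre-dual to the unit map $\C \to \Hom(\FF,\FF)$, $1 \mapsto \mathrm{id}_\FF$, is an isomorphism; hence $\Ext^2_0(\FF,\FF) = 0$ and all obstructions vanish. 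The stable locus is therefore smooth of dimension $\dim \Ext^1(\FF,\FF)$.

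Next I would construct and analyse the form. The composite in the statement is the Yoneda cup product $\Ext^1(\FF,\FF) \times \Ext^1(\FF,\FF) \to \Ext^2(\FF,\FF)$ followed by $\tr$ and the trivialisation by $\sigma$, so at each stable $\FF$ it is a bilinear form on $\Ext^1(\FF,\FF) = T_{[\FF]}\JJ$. Graded commutativity of the Yoneda product in bidegree $(1,1)$ gives $\alpha \cup \beta = -\beta \cup \alpha$, so the form is alternating; and it is exactly the Serre duality pairing $\Ext^1(\FF,\FF) \times \Ext^1(\FF,\FF) \to H^2(S,\OO_S) \cong \C$, hence non-degenerate. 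Passing to a (quasi-)universal sheaf $\EE$ on $\JJ \times S$ and the relative $\EE xt$-sheaves, these pointwise forms glue into a global holomorphic, everywhere non-degenerate $2$-form on the smooth locus.

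The main obstacle is closedness. Here I would write the $2$-form through the Atiyah--Kodaira--Spencer class of the universal sheaf $\EE$, cupping with the pullback of $\sigma$ from $S$ and applying the relative trace, and then reduce the identity $d\sigma_\JJ = 0$ to the closedness of $\sigma$ on $S$ — which is automatic, since $d\sigma$ would be a holomorphic $3$-form on a surface and $H^0(S,\Omega^3_S) = 0$ — together with the naturality of the trace and cup-product in families. This last reduction is the genuinely technical point and is precisely the content of Mukai's local computation in \cite{Mu}; by contrast the smoothness and non-degeneracy are formal consequences of simplicity and Serre duality.
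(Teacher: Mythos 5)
The paper gives no proof of this statement: it is quoted directly as Theorem~0.1 of \cite{Mu}, so there is no internal argument to compare yours against. Your sketch is the standard reconstruction of Mukai's proof and is essentially sound in outline (smoothness from the vanishing of the trace-free part of $\Ext^2(\FF,\FF)$, non-degeneracy from Serre duality with $K_S\cong\OO_S$, closedness deferred to Mukai's local computation), but two justifications need repair. First, your reason for antisymmetry is false as stated: the Yoneda product on $\Ext^*(\FF,\FF)$ is the cohomology of an endomorphism algebra and is \emph{not} graded-commutative in general, so $\alpha\cup\beta=-\beta\cup\alpha$ does not follow from ``graded commutativity.'' The correct input is the graded symmetry of the trace, $\tr(\alpha\cup\beta)=(-1)^{|\alpha||\beta|}\tr(\beta\cup\alpha)$; since only the composite with $\tr$ enters the definition of the form, this is all you need (and here, where $\tr$ is an isomorphism on the one-dimensional $\Ext^2(\FF,\FF)$, it does recover your identity a posteriori). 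Second, to place the obstruction class in $\Ext^2_0(\FF,\FF)$ you need not only that its trace equals the obstruction to deforming $\det\FF$ but also that the latter vanishes, i.e.\ formal smoothness of the Picard scheme --- automatic in characteristic zero but not for free, since $H^2(S,\OO_S)\neq 0$ on a K3 surface. Note also that your sheaves have rank zero, so the degree-zero trace $\Hom(\FF,\FF)\to H^0(\OO_S)$ vanishes; you correctly dualize the \emph{unit} map rather than that trace, which is the right move and worth keeping explicit. Finally, closedness --- the only non-formal assertion --- is only pointed at, not proved; that is acceptable for a cited theorem, but as written your argument establishes only that (\ref{Mukai}) defines a non-degenerate holomorphic $2$-form on the stable locus.
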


The fact that $S$ is a double cover of $Y$ implies that also the stable locus of $\PP$ is symplectic: 

\begin{thm}\label{Psympl}
\begin{itemize}
\item[$i)$] The stable locus of $\PP$ has a symplectic form, induced by (\ref{Mukai}) by restriction.
\item[$ii)$] $\supp: \PP \to \P^3$ is a Lagrangian fibration and the generic fiber is an abelian 3-fold with polarization type $(1,1,2)$.
\end{itemize}
\end{thm}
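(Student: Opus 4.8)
The plan is to derive both parts from one structural fact, namely that the involution $\eta=j\circ\tau^*$ preserves Mukai's symplectic form $\omega$ of Theorem \ref{Muk}, combined with the Beauville--Mukai Lagrangian structure on $\JJ$ and the classical theory of Prym varieties of \emph{ramified} double covers. The first step is therefore a sign analysis for the two involutions. Since $\tau$ is antisymplectic on $S$, i.e. $\tau^*\sigma=-\sigma$ for the holomorphic $2$-form $\sigma$ of $S$, and the pairing (\ref{Mukai}) is built from the $\tau$-equivariant maps $\cup$ and $\tr$ followed by the fixed identification $H^2(\OO_S)\cong\C$ determined by $\sigma$, the induced involution $\tau^*$ on the stable locus of $\JJ$ satisfies $(\tau^*)^*\omega=-\omega$, the sign coming entirely from the action of $\tau$ on $H^2(\OO_S)=\overline{H^{2,0}(S)}$. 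For $j$ I would invoke Lemma \ref{defj}$(i)$: over the locus of smooth curves $j$ is fibrewise multiplication by $-1$ on the Beauville--Mukai fibration, and in the local cotangent--bundle model $T^*B$ of such a fibration fibrewise inversion sends $\sum dp_i\wedge dq_i$ to its negative; hence $j^*\omega=-\omega$ on a dense open set, so $j$ is antisymplectic on the whole stable locus. Consequently $\eta^*\omega=(\tau^*)^*j^*\omega=\omega$, so $\eta$ is a symplectic involution.

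To conclude part $(i)$, at a fixed point the differential $\eta_*$ is a symplectic automorphism of the tangent space whose $(+1)$-eigenspace is exactly the tangent space to $\Fix(\eta)$. Because $\omega$ is $\eta$-invariant it pairs the $(+1)$- and $(-1)$-eigenspaces to zero, so it restricts nondegenerately to the $(+1)$-eigenspace; thus the restriction of $\omega$ to the stable locus of $\PP=\Fix^0(\eta)$ is a closed holomorphic nondegenerate $2$-form.

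For part $(ii)$, since $\OO_S(C)=\pi^*\OO_Y(1)$ is the hyperplane bundle of $S\subset\P^4$, the $\tau$-action on $H^0(\OO_S(C))=\langle x_0,\dots,x_4\rangle$ splits it as $\langle x_0,\dots,x_3\rangle\oplus\langle x_4\rangle$, so the component of $|C|^{\tau}$ meeting the zero section is $\P(\langle x_0,\dots,x_3\rangle)=\pi^*|-K_Y|\cong\P^3$, giving $\supp:\PP\to\P^3$. Over a general point the fibre is the Prym variety $P(C',\tau)$ of the double cover $C'=\pi^{-1}(C'')\to C''$, where $C''\in|-K_Y|$ is a smooth plane cubic, hence an elliptic curve; its dimension is $g(C')-g(C'')=4-1=3$, so it is an abelian $3$-fold. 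Each such fibre lies inside a fibre $J(C')$ of the Beauville--Mukai Lagrangian fibration $\supp:\JJ\to|C|$, which is Lagrangian in $\JJ$; as $\omega|_{\PP}$ is the restriction of $\omega$, the Prym fibre is isotropic, and being of dimension $3=\tfrac12\dim\PP$ it is Lagrangian.

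The most substantial point is the polarization type, which I would obtain from the theory of complementary abelian subvarieties of the principally polarized $J(C')$. The cover is ramified, at the $6=(-K_Y)\cdot(-2K_Y)$ points $C''\cap B$, so $f^*:J(C'')\to J(C')$ is injective and identifies $J(C'')$ with $A:=\Fix^0(\tau^*)$, the abelian subvariety complementary to $P:=\Fix^0(-\tau^*)=P(C',\tau)$. Since $C''$ is elliptic, $A$ is an elliptic curve and $\Theta_{C'}|_A=2\,\theta_{C''}$ (pullback scales the principal polarization by $\deg f=2$), so $\ker\phi_{\Theta|_A}=A[2]\cong(\Z/2)^2$. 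By the complementarity relation $A\cap P=\ker\phi_{\Theta|_A}=\ker\phi_{\Xi}$ for the induced Prym polarization $\Xi=\Theta_{C'}|_P$; hence $\ker\phi_{\Xi}\cong(\Z/2)^2$ on the $3$-fold $P$. Writing the type of $\Xi$ as $(d_1,d_2,d_3)$ one has $\ker\phi_{\Xi}\cong\prod_i(\Z/d_i)^2$, so $(\Z/2)^2\cong\prod_i(\Z/d_i)^2$ forces exactly one $d_i=2$ and the others equal to $1$, i.e. type $(1,1,2)$. The main obstacle is precisely this last step: one must correctly identify $A$ with $J(C'')$ together with the scaling $\Theta_{C'}|_A=2\theta_{C''}$ and the equality $A\cap P=\ker\phi_{\Theta|_A}$. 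It is the genus $1$ of the base curve that makes $A$ one-dimensional and hence produces a single factor $2$.
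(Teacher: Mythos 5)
Your proposal is correct, and it reaches the conclusion by a noticeably more self-contained route than the paper. The paper's proof of $(i)$ establishes $\tau^*\sigma=-\sigma$ exactly as you do (the only non-intrinsic ingredient of Mukai's pairing is the identification $H^2(\OO_S)\cong\C$ by $\sigma$), but for $j^*\sigma=-\sigma$ it simply cites Proposition 3.11 of \cite{ASF}; you replace that citation with the action--angle picture of fibrewise inversion on the Beauville--Mukai system. That argument is fine, but note it silently uses that the section $C\mapsto\OO_C$ with respect to which the inversion is taken is Lagrangian (inversion about a non-Lagrangian section changes $\omega$ by $-\omega$ plus twice a pullback from the base); here this holds because the tangent space to the section at $[\OO_C]$ is $H^0(N_{C/S})=H^0(\omega_C)\subset\Ext^1(\OO_C,\OO_C)$, which is isotropic for Serre duality, so you should say a word about it. Your eigenspace argument for nondegeneracy of $\omega|_{\Fix^0(\eta)}$ is the standard one the paper leaves implicit. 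For $(ii)$ the paper again outsources: the Lagrangian property to Section 6 of \cite{Mark2} and the polarization type to Section 3 of \cite{Mum}, whereas you prove the first by exhibiting the Prym fibre as an isotropic half-dimensional subvariety of the Lagrangian Jacobian fibre, and the second via complementary abelian subvarieties ($f^*$ injective since the cover is ramified at the $6$ points of $C''\cap B$, $\Theta_{C'}|_A\equiv 2\theta_{C''}$, and $\ker\phi_{\Theta|_A}\cong\ker\phi_{\Xi}\cong(\Z/2)^2$, forcing type $(1,1,2)$). Both of your substitutes are correct and arguably more informative than the citations; what the paper's approach buys is brevity and a safe delegation of the classical Prym computation to Mumford.
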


\begin{proof}
\begin{itemize}
\item[$i)$] It suffices to prove that $\eta$ preserves the symplectic form $\sigma$, i.e. $\eta^* \sigma =\sigma$. 
We show that $\tau^* \sigma = - \sigma$ and $j \sigma =- \sigma$.  

In the definition of the symplectic structure (\ref{Mukai}), all the identifications are intrinsic except for the last one 
\begin{equation}
\xymatrix{
H^2(\FF) \ar[r]^{\cdot / \sigma} & \C,}
\end{equation}
so the action of $\tau^*$ on $H^{(2,0)}(\JJ^{stable})$ is the same of the action of $\tau$ on $H^{(2,0)}(S)$, and for a \kk double over of a cubic surface $\tau^* \sigma = - \sigma$. 

The action of $j$ on $H^{(2,0)}(\JJ^{stable})$ is described in the proof of Proposition 3.11 of \cite{ASF} (the same argument applies in this case).

\item[$ii)$] The fact that $\supp$ is a \LF is proven in the 2nd case of Section 6 \cite{Mark2}. 
The generic fiber is the Prym variety of a double cover of an elliptic curve by a curve of genus 4, hence the polarization type is $(1,1,2)$ by Section 3 \cite{Mum}. 
\end{itemize}
\end{proof} 

\section{Singularities of $\PP$}

In this section we describe the singular locus and the singularities of $\PP$, and we deduce that it does not admit any symplectic desingularisation.

$\\ $

By Theorem \ref{Muk}, the singular locus of $\JJ$ is contained in the locus of strictly semistable sheaves. 
As a torsion free sheaf on an integral curve is stable with respect to any polarization, a strictly semistable sheaf of $\JJ$ is supported on a non-integral curve of $|\pi^*(-K_{Y})|$. 

Hence the singular locus $Sing(\PP)$ of $\PP$ is contained in the locus of $\eta$-invariant strictly semistable sheaves in $\JJ$, which are supported on non-integral curves of $\pi^*|-K_{Y}|$.

\begin{lm}
The locus of $\eta$-invariant strictly semistable sheaves of $\JJ$ is the union of the 27 singular K3 surfaces given by $\overline{J^{-2}}^{C}$ of the 27 elliptic pencils associated to the 27 lines on $Y$. 
Each \kk has 5 $A_1$-singularities, and each of these singular points is an intersection point of 3 K3 surfaces. $\\ $
A smooth point represents a polystable sheaf of type 
\begin{equation}\label{f1f2}
\FF = \OO_{C_1}(-2) \oplus \FF_2, \quad \FF_2 \in J^{-2}(C_2),
\end{equation}
where $C_1$ is a smooth rational curve, $C_2$ a curve of arithmetic genus $1. \\ $ 
A singular point represents a polystable sheaf of type 
\begin{equation}\label{f1f2f3}
\FF = \OO_{C^1}(-2) \oplus \OO_{C^2}(-2) \oplus \OO_{C^3}(-2), 
\end{equation}
where $C^i$ are smooth rational curves, intersecting transversely in pairs in 2 points. 
\end{lm}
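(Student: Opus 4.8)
The plan is to describe the $\eta$-invariant strictly semistable sheaves by their polystable (graded) representatives and to organise them fibrewise over the $27$ dual lines of Lemma \ref{blabla}. Since $C^2=6$ and $\chi=-3$, every semistable sheaf of $\JJ$ has slope $\mu_C=-1/2$, and in a Jordan--H\"older decomposition each stable factor must again have slope $-1/2$. First I would record the intersection numbers $C_1\cdot C=C^i\cdot C=2$ and $C_2\cdot C=4$ of the components of the non-integral curves with $C$; imposing slope $-1/2$ on a stable factor supported on a single component then forces it to be $\OO_{C_1}(-2)$ on the rational component $C_1$ (respectively $\OO_{C^i}(-2)$ on each $C^i$) and a degree $-2$ torsion-free, hence locally free, sheaf $\FF_2\in J^{-2}(C_2)$ on the genus $1$ component. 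Because $c_1=[C]$ pins the support cycle to $C_1+C_2$ (respectively $C^1+C^2+C^3$), no other grouping of summands is possible, and this yields exactly the polystable types (\ref{f1f2}) and (\ref{f1f2f3}).

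Next I would check $\eta$-invariance. Since $C_1,C_2,C^i$ are $\pi$-preimages they are $\tau$-stable, so both $\tau^*$ and $j$ act within the moduli of each direct summand. On a $\tau$-stable smooth rational component the bundle $\OO(-2)$ is the unique sheaf of its slope and is therefore $\eta$-fixed, so the only point to settle is the action on $J^{-2}(C_2)$. By Lemma \ref{defj}$(i)$ the fibrewise action of $\eta=j\circ\tau^*$ is $-\tau^*$; as $C_2\to q$ is a double cover of the rational curve $q\cong\P^1$, the involution $\tau^*$ acts by $-1$ on $H^0(\omega_{C_2})$, so the differential of $\eta$ along the fibre is the identity and the Prym of $C_2\to q$ is the whole of $J(C_2)$. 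This forces $\eta$ to fix $J^{-2}(C_2)$ pointwise, so the entire family (\ref{f1f2}) and the sheaves (\ref{f1f2f3}) lie in $\Fix(\eta)$.

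I would then assemble these sheaves over a fixed dual line. With $\OO_{C_1}(-2)$ rigid and $C_2$ running through the elliptic pencil of conics residual to the line $\ell$ (the $\pi$-pullback of the pencil of plane sections of $Y$ through $\ell$), the $\eta$-fixed strictly semistable sheaves form the relative compactified Jacobian $\overline{J^{-2}}^{C}$ of that pencil, which is a possibly singular \kk; there are $27$ of them, one per line. For the singularities I would use the configuration of the $27$ lines (V.4 \cite{Ha}): each such pencil has exactly $5$ reducible fibres $C_2=C^{i'}\cup C^{i''}$, occurring at the $5$ tritangent planes through $\ell$, where the residual conic breaks into two lines. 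At such a fibre the split sheaf $\OO(-2)\oplus\OO(-2)$ is the only strictly semistable point, and a Kuranishi/$\Ext$ computation on the $2$-nodal genus $1$ curve $C_2$ identifies the local model of $\overline{J^{-2}}^{C}$ there as $\C^2/\pm 1$, an $A_1$-singularity, the surface being smooth elsewhere. The resulting sheaf on the full curve $C_1\cup C^{i'}\cup C^{i''}$ is then the fully decomposed (\ref{f1f2f3}), symmetric in its three rational components; viewing each of the three lines in turn as the distinguished line exhibits this point on $3$ of the $27$ K3 surfaces, and the bookkeeping $27\times 5=45\times 3$ matches the $45$ triple points of Lemma \ref{blabla}. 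The main obstacle is precisely this local analysis---proving the $A_1$ type and the smoothness of $\overline{J^{-2}}^{C}$ away from the split points through the Kuranishi model---together with making the elliptic-pencil/tritangent-plane combinatorics precise; the slope classification and the $\eta$-invariance are comparatively routine.
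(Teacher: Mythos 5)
Your proposal follows the paper's proof essentially step for step: the same slope computation forcing degree $-2$ on each Jordan--H\"older factor (you package it via the fixed slope $-1/2$, the paper by equating $\mu_C(\FF_1)=\mu_C(\FF_2)$ and solving for the degrees), the same $\eta$-invariance argument via Pryms of double covers of rational curves coinciding with the full Jacobian, and the same organisation of the locus into the $27$ relative compactified Jacobians $\overline{J^{-2}}^{C}$ of the elliptic pencils, with the $27\times 5=45\times 3$ bookkeeping for the triple points. The one place you diverge is the step you yourself flag as the main obstacle --- proving that each K3 is singular at the $5$ split points --- which the paper handles not by a local Kuranishi computation but by an Euler characteristic count: the pencil $|C_2|$ on $S$ has $5$ reducible fibers ($\chi=2$ each) and hence $14$ nodal irreducible ones, so $\chi\bigl(\overline{J^{-2}}^{C}\bigr)=19\neq 24$ and the surface cannot be smooth; the precise $A_1$ local model is then supplied by the later tangent-cone analysis rather than inside this lemma.
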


\begin{proof}
Firstly, we determine the strictly semistable sheaves supported on a curve of $\pi^*|-K_{Y}|$. 
A polystable non-stable sheaf $\FF$ of $\JJ$ with $\supp(\FF) \in \pi^*|-K_{Y}|$  has as support a non-integral curve, hence a curve of type $C_1 \cup C_2$ or $C^1 \cup C^2 \cup C^3$ as described in Lemma \ref{blabla}. 

In the case of $C_1 \cup C_2$, $\FF = \FF_1 \oplus \FF_2$, where $\FF_i$ is a pure 1-dimensional sheaf on $C_i$ of degree $d_i$ and $\mu_C({\FF_1})=\mu_C({\FF_2}).$
Since
$$\mu_C({\FF_i})=\frac{\chi (\FF_i)}{C_i \cdot C}=\frac{1-g_i+d_i}{C_i^2 + C_1 \cdot C_2}=\frac{1-g_i+d_i}{2g_i-2 + C_1 \cdot C_2},$$
we get $2d_1+2=d_2.$
Moreover $\chi (\FF) = \chi (\FF_1) + \chi (\FF_2)$ implies $-4=d_1+d_2$.
Hence $d_1=d_2=-2$, and $\FF$ is as in (\ref{f1f2}). 

In the case of $C^1 \cup C^2 \cup C^3$, there are two possibilities:
\begin{itemize}
\item[-] $\FF$ is as in (\ref{f1f2}), where $C_1$ is one of the 3 rational curves and $C_2$ is the union of the remaining 2. 
\item[-] $\FF = \FF_1 \oplus \FF_2 \oplus \FF_3$,  where $\FF_i$ is a pure 1-dimensional sheaf on $C_i$ of degree $d_i$ and $\mu_C({\FF_1})=\mu_C({\FF_2})=\mu_C({\FF_3})$, i.e. $d_1=d_2=d_3$. 
Since $\chi (\FF) = \chi (\FF_1) + \chi (\FF_2) + \chi (\FF_3)$, we have $d_1=d_2=d_3=-2$ and $\FF$ is as in (\ref{f1f2f3}). 
\end{itemize}

Secondly, we show that all the polystable sheaves supported on curves of $\pi^*|-K_Y|$ are $\eta$-invariant. 
Both in (\ref{f1f2}) and in (\ref{f1f2f3}), $\eta$ acts separetely on the addendi of $\FF$, which are points of Prym varieties of double covers of rational curves. 
As the hyperelliptic involution of a hyperelliptic curve induces the involution $-1$ on the Jacobian, the corresponding Prym variety coincides with the Jacobian. 
So $\FF$ is $\eta$-invariant. 

Thirdly, we describe the locus of $\eta$-invariant strictly semistable sheaves using the support map. 
To this aim, we observe that each dual line $L^{\vee}$ gives a pencil of conics on $Y$, and its pullback is a pencil of elliptic curves on $S$. 
In the notation of Lemma \ref{blabla}, the double cover in $S$ of the line $L$ is one of the 27 rational curves $C_1$, and the elliptic pencil is $|C_2|$ (which contains reducible curves of type $C^i \cup C^j$). 
The relative compactified Jacobian $\overline{J^{-2}}^{C}$ of each elliptic pencil gives the sheaves $\FF_2$ in (\ref{f1f2}) (and their degenerations $\OO_{C^i}(-2) \oplus \OO_{C^j}(-2)$ in (\ref{f1f2f3})). 
It is $\JJ_{C_2}^{C,-2}$, hence a K3 surface. 
The elliptic fibration map is the support map restricted to the polystable sheaves corresponding to $C_1$.  
Since $|C_2|$ contains 5 reducible fibers with two simple nodes, and $\chi(S)=24$, $S \to |C_2|$ has 14 irreducible singular members with a simple node. 
Hence the elliptic fibration $\overline{J^{-2}}^{C}$ has exactly 19 irreducible singular fibers and $\chi=19$, so it is singular. 

\end{proof}

To determine the singularities of $\PP$, we describe the tangent cone $C_{[\FF]}(\PP)$ to $\PP$ at the strictly semistable sheaves.  
To this aim, we use the following local analytic model of $\JJ$ (see Section 2.6 and 2.7 \cite{KLS}). 

\begin{thm}\label{Kuran}
There exists a linear map $k: \Ext^1_S(\FF,\FF) \to \Ext^2_S(\FF,\FF)$, called Kuranishi map, satisfying the following properties:
\begin{itemize}
\item[1)] its image is contained in the kernel of the trace map 
$$\tr: \Ext^2_S(\FF,\FF) \to H^2(\OO_S)=\C,$$ 
denoted by  $\Ext^2_S(\FF,\FF)_0$;
\item[2)] $k$ is equivariant with respect to the natural conjugation action of $G:=\PAut(\FF)$ on $\Ext^1_S(\FF,\FF)$ and $\Ext^2_S(\FF,\FF)$;
\item[3)] $k$ has an expansion at $0$ $$k=k_2+k_3+...$$
starting from a quadratic term which is the cup product
$$k_2(\GG):=\frac{1}{2} \GG \cup \GG;$$
\item[4)] $(k^{-1}(0) // G,0)$ is a local analytic model of $(\JJ,\FF)$ and $(k^{-1}_2(0) // G,0)$ is a local analytic model of $C_{[\FF]}(\JJ)$.
\end{itemize}
\end{thm}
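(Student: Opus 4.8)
The plan is to construct $k$ as the Kuranishi obstruction map arising from the differential graded Lie algebra (DGLA) that governs deformations of $\FF$ on $S$. First I would choose a finite locally free resolution $E^\bullet \to \FF$, form the endomorphism complex $\HH om^\bullet(E^\bullet,E^\bullet)$, and take a Dolbeault (or \v{C}ech) resolution to obtain a DGLA $L^\bullet$ with differential $d$ and bracket $[\,\cdot\,,\cdot\,]$ induced by the commutator of composition, so that $H^i(L^\bullet)=\Ext^i_S(\FF,\FF)$ and the bracket induces the Yoneda/cup product on cohomology. Infinitesimal deformations of $\FF$ are then encoded by the Maurer--Cartan equation $d a + \tfrac12[a,a]=0$ for $a\in L^1$, up to gauge equivalence.

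To produce $k$ I would apply homological perturbation: fix a harmonic (homotopy) splitting $\iota:\Ext^\bullet_S(\FF,\FF)\to L^\bullet$, projection $p$, and contracting homotopy $h$ with $\mathrm{id}-\iota p=dh+hd$ and $ph=h\iota=h^2=0$, then solve Maurer--Cartan order by order. Writing $\xi\in\Ext^1_S(\FF,\FF)$ and seeking $a(\xi)=\iota\xi+a_2(\xi)+\cdots$ with $a_n$ homogeneous of degree $n$, the recursion $a_n=-\tfrac12\,h\sum_{i+j=n}[a_i,a_j]$ pushes the residual obstruction into cohomology; setting $k(\xi):=p\bigl(\tfrac12[a(\xi),a(\xi)]\bigr)$ gives a map with expansion $k=k_2+k_3+\cdots$ whose leading term is $k_2(\xi)=\tfrac12[\xi,\xi]=\tfrac12\,\xi\cup\xi$. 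This establishes property 3.

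For property 2, the group $G=\PAut(\FF)$ acts by conjugation on $L^\bullet$; for a polystable $\FF$ the automorphism group is reductive, so I can choose $\iota$, $p$ and $h$ to be $G$-equivariant (averaging over a maximal compact subgroup), whence the entire recursion, and thus $k$, is $G$-equivariant. For property 1, I would use that the trace defines a strict morphism of DGLAs $\tr:L^\bullet\to B^\bullet$ onto a model $B^\bullet$ of $R\Gamma(\OO_S)$, since $\tr[a,b]=0$ and the induced bracket on $H^\bullet(\OO_S)$ is abelian. The unit section $\mathrm{id}:\OO_S\to\HH om^\bullet(E^\bullet,E^\bullet)$ splits $\tr$ $G$-equivariantly, so the splittings can be chosen compatibly; by functoriality of the Kuranishi construction $\tr\circ k$ coincides with the obstruction map of the abelian DGLA $B^\bullet$, which is identically zero. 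Hence $\Imm(k)\subset\Ext^2_S(\FF,\FF)_0$.

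Finally, for property 4, the Kuranishi theorem provides a (semi)universal deformation of $\FF$ whose base is cut out by $k^{-1}(0)\subset\Ext^1_S(\FF,\FF)$, and an analytic Luna-type slice theorem identifies the germ of the moduli space $(\JJ,\FF)$ with the quotient $(k^{-1}(0)/\!/G,0)$; passing to leading order then identifies the tangent cone $C_{[\FF]}(\JJ)$ with $(k_2^{-1}(0)/\!/G,0)$. I expect the main obstacle to be precisely this last step: the local analytic isomorphism requires an equivariant \'etale slice theorem in the analytic category and a careful GIT interpretation of the double quotient $/\!/$ for the reductive group $G=\PAut(\FF)$, together with the compatibility of all choices (resolution, splitting, homotopy) with both the $G$-action and the trace. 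This is the technical core carried out in Sections 2.6--2.7 of \cite{KLS}, which I would invoke rather than reprove.
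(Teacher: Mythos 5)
Your proposal is correct in outline, but note that the paper offers no proof of this statement at all: Theorem \ref{Kuran} is quoted directly from Sections 2.6--2.7 of \cite{KLS}, which is exactly the reference you defer to for the technical core (the DGLA/Maurer--Cartan construction, the equivariant homotopy data for the reductive group $\PAut(\FF)$, the vanishing of the trace of the obstruction, and the analytic Luna slice identifying the germ and its tangent cone). So your sketch faithfully reconstructs the argument of the cited source, and invoking \cite{KLS} at the end puts you on the same footing as the paper itself.
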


\begin{thm}\label{singgen}
At a polystable sheaf $\FF=\FF_1 \oplus \FF_2$ as in (\ref{f1f2}), $C_{[\FF]}(\PP)$ is locally analytically equivalent to $\C^2 \times (\C^4/ \pm 1)$. 
\end{thm}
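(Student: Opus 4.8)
The plan is to read the tangent cone directly off the Kuranishi model of Theorem \ref{Kuran}, applied to the polystable sheaf $\FF=\FF_1\oplus\FF_2$ with $\FF_1=\OO_{C_1}(-2)$ rigid and $\FF_2\in J^{-2}(C_2)$. First I would identify the group $G=\PAut(\FF)$: since $\FF_1,\FF_2$ are non-isomorphic stable sheaves of the same slope, $\Aut(\FF)=\C^\ast\times\C^\ast$ and $G=\C^\ast$ acting by $\mu=\lambda_2/\lambda_1$. Then I would decompose $\Ext^1_S(\FF,\FF)=\bigoplus_{i,j}\Ext^1_S(\FF_i,\FF_j)$ into $G$-weight spaces. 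Writing $V_0=\Ext^1(\FF_2,\FF_2)$ (weight $0$), $V_+=\Ext^1(\FF_1,\FF_2)$ (weight $+1$) and $V_-=\Ext^1(\FF_2,\FF_1)$ (weight $-1$), a Riemann--Roch/Mukai-pairing computation using $C_1^2=-2$, $C_2^2=0$ and $C_1\cdot C_2=4$ (together with $\Hom(\FF_1,\FF_2)=\Hom(\FF_2,\FF_1)=0$, which kills the $\Ext^0$ and, by Serre duality, the $\Ext^2$ cross terms) gives $\dim V_0=2$, $\dim V_+=\dim V_-=4$. Finally $\Ext^2_S(\FF,\FF)=\Ext^2(\FF_1,\FF_1)\oplus\Ext^2(\FF_2,\FF_2)\cong\C^2$, on which the trace is the sum, so $\Ext^2_S(\FF,\FF)_0\cong\C$ is spanned by the difference of the two diagonal classes.

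Next I would compute the quadratic Kuranishi term $k_2(x)=\tfrac12 x\cup x$ using the block (matrix) structure of the Yoneda product on $\Ext^\ast(\FF,\FF)$. For $x=a+b+c$ with $a\in V_0$, $b\in V_+$, $c\in V_-$, the only contributions landing in the weight-$0$ space $\Ext^2_S(\FF,\FF)_0$ come from $a\cup a$ and from $b\cup c$, $c\cup b$ (the terms $b\cup b$, $c\cup c$, $a\cup b$, $a\cup c$ land in weight $\pm1,\pm2$ blocks, which are either $\Ext^2(\FF_i,\FF_j)=0$ for $i\neq j$ or of the wrong weight). The self-term vanishes: $\tr(a\cup a)$ is the Mukai symplectic pairing on $V_0$, hence antisymmetric, so $a\cup a=0$ in the one-dimensional space $\Ext^2(\FF_2,\FF_2)$; thus $k_2|_{V_0}=0$. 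The off-diagonal terms combine (via graded cyclicity of the trace, $\tr(c\cup b)=-\tr(b\cup c)$) into a single nondegenerate pairing $\beta\colon V_+\times V_-\to\Ext^2_S(\FF,\FF)_0\cong\C$, which is exactly the Serre-duality pairing between $V_+$ and $V_-$. Hence, by Theorem \ref{Kuran}(4), $C_{[\FF]}(\JJ)$ is modelled on $\{\beta=0\}/\!\!/G=\big(V_0\times\{(b,c):\beta(b,c)=0\}\big)/\!\!/\C^\ast$.

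I would then determine the action of $\eta=j\circ\tau^\ast$ on this model. Because $C_1,C_2$ are $\tau$-invariant, $\tau^\ast$ preserves each summand $\FF_i$ and hence each of $V_0,V_+,V_-$; since $j$ is contravariant and $j\FF_i\cong\FF_i$, it sends $\Ext^1(\FF_1,\FF_2)\to\Ext^1(\FF_2,\FF_1)$. Thus $\eta$ fixes $V_0$ pointwise (consistent with the \kk $\overline{J^{-2}}^{C}$ being contained pointwise in $\PP$, so that $V_0$ is the tangent direction along $Sing(\PP)$) and swaps $V_+\leftrightarrow V_-$ through an isomorphism $\psi\colon V_+\to V_-$; correspondingly $\eta$ acts on $G=\C^\ast$ by inversion, so $G^{\eta}=\{\pm1\}$. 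Running the honest fixed-point computation on the GIT quotient, a point $[a,b,c]$ with $b,c\neq0$ is $\eta$-fixed exactly when $c$ is proportional to $\psi(b)$, and after using $G$ to normalise $(b,c)$ to the diagonal $D=\{(b,\psi(b))\}\cong\C^4$ one is left with the residual ambiguity $\nu=\pm\sqrt{\mu}$. This produces the local model $\big(V_0\oplus(D\cap k_2^{-1}(0))\big)/\Z_2$, where $\Z_2=\{\pm1\}$ acts trivially on $V_0\cong\C^2$ and by $-1$ on $D\cong\C^4$.

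It remains to show that $\beta$ restricts to zero on the diagonal $D$, i.e. that $\beta(b,\psi(b))\equiv0$; this is the main obstacle. I expect to argue it in one of two equivalent ways: either by $\eta$-equivariance of $k_2$, noting that $\eta$ acts by $-1$ on $\Ext^2_S(\FF,\FF)_0$ (through its $-1$-action on $H^2(\OO_S)$), so that $k_2(\eta x)=-k_2(x)$ forces $\beta(b,\psi(b))=-\beta(b,\psi(b))$ on $D$; or, more cheaply, by the dimension count, since $\dim\PP=6$ by Theorem \ref{Psympl} and $C_{[\FF]}(\PP)\subseteq\big(V_0\oplus D\big)/\Z_2$ has dimension $2+\dim\{\beta|_D=0\}$, which can equal $6$ only if $\beta|_D\equiv0$. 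Either way $D\cap k_2^{-1}(0)=D$, and therefore $C_{[\FF]}(\PP)\cong\big(V_0\oplus D\big)/\Z_2\cong\C^2\times(\C^4/\pm1)$, as claimed. The two places demanding the most care are the precise sign in the cyclicity identity that makes $\beta$ the (nondegenerate) Serre pairing while $k_2|_{V_0}=0$, and the justification that $\eta$ acts by $-1$ on $\Ext^2_S(\FF,\FF)_0$ (equivalently, the clean passage from the fixed locus of $\eta$ on the quotient to the quotient of the fixed data, where the $\Z_2$ arises from the square-root ambiguity).
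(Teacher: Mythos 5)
Your proposal is correct and sits inside the same framework as the paper's proof: the Kuranishi model of Theorem \ref{Kuran}, the weight decomposition of $\Ext^1_S(\FF,\FF)$ under $G=\C^*$ (your $V_0,V_+,V_-$ are the paper's $U_2, W, W^*$, with the same dimensions $2,4,4$), the computation of the $\eta^*$-fixed locus of the GIT quotient as $\C^2\times(\C^4/\pm 1)$, and a dimension count to collapse the chain of inclusions $C_{[\FF]}(\PP)\subset C_{[\FF]}(\JJ)^{\eta^*}\subset(\Ext^1_S(\FF,\FF)//G)^{\eta^*}$ into equalities. Two points of execution differ, both to your credit. First, you compute the fixed locus of $\eta^*$ on $(W\times W^*)//G$ by a direct orbit analysis (normalising to the diagonal $D$ up to the residual $\pm\sqrt{\lambda}$ ambiguity), where the paper passes to the Segre invariants $u_{ij}=x_iy_j$ and recognises the cone over the Veronese $v_2:\P^3\to\P^9$; these are the same computation in different clothing. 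Second, and more substantively, you make explicit the quadratic Kuranishi term --- the single equation $\beta(b,c)=\tr(b\cup c)=0$ cutting $C_{[\FF]}(\JJ)$ out of $\Ext^1//G$, with $k_2|_{V_0}=0$ by antisymmetry of the Mukai pairing --- and you isolate the genuine issue that $\beta$ must vanish on the diagonal $D$. The paper never addresses this: it computes only the outermost term of the chain and asserts the inclusions are identities, which is tacitly the same dimension count you give as your ``cheaper'' argument. Your dimension count is complete and correct (a $6$-dimensional $C_{[\FF]}(\PP)$ inside the irreducible $6$-dimensional $\C^2\times(\C^4/\pm1)$ forces $\beta|_D\equiv 0$ and equality throughout), so the proof closes. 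I would lean on that argument rather than your first one: the claim that $\eta$ acts by $-1$ on $\Ext^2_S(\FF,\FF)_0$ requires chasing the signs through both the $\tau$-action on $H^2(\OO_S)$ and the order-reversal of Yoneda composition under the dualization $j=\EE xt^1_S(\,\cdot\,,\OO_S(-C))$, and is exactly the kind of computation where a stray graded sign flips the conclusion; the equivalent but safer formulation is that $\eta$-invariance of the Mukai symplectic form on $W\times W^*$ forces the bilinear form $B(b,b')=\beta(b,\psi(b'))$ on $V_+$ to be antisymmetric, whence $\beta|_D=B(b,b)=0$. You flag this as the delicate point yourself, so this is a matter of emphasis, not a gap.
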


\begin{proof}
At the infinitesimal level, $\eta$ induces an involution $\eta^*$ on $\Ext^1_S(\FF,\FF)$. 
By Theorem \ref{Kuran}, there is a natural sequence of inclusions 
\begin{equation}\label{inclusions}
C_{[\FF]}(\PP)= C_{[\FF]}(\JJ^{\eta}) \subset C_{[\FF]}(\JJ)^{\eta^*} \subset (\Ext^1_S(\FF,\FF)//G)^{\eta^*}.
\end{equation}
To deduce the thesis, we show that (\ref{inclusions}) is a sequence of identities in the following two steps: 
\begin{itemize}
\item[$i)$] 
\begin{equation}\label{i)}
\Ext^1_S(\FF,\FF) // G = \C^2 \times \widehat{\Imm(\sigma_{2,2})},
\end{equation} 
where $\widehat{\Imm(\sigma_{2,2})}$ is the affine cone over the Segre embedding $\sigma_{2,2}: \P^3 \times \P^3 \to \P^{15}.$
\item[$ii)$] 
\begin{equation}\label{ii)}
(\Ext^1_S(\FF,\FF)//G)^{\eta^*}=\C^2 \times (\C^4/ \pm 1).
\end{equation} 
\end{itemize} 

\begin{itemize}
\item[$i)$]
Set
$$
\begin{aligned} 
\qquad U_1:=\Ext^1_S(\FF_1,\FF_1), \ \ & 
U_2:= \Ext^1_S(\FF_2,\FF_2) , \\ 
\qquad W:= \Ext^1_S(\FF_1,\FF_2), \ \ &
W':= \Ext^1_S(\FF_2,\FF_1).
\end{aligned}
$$
By Serre duality $W'$ is the dual of $W$ with respect to the pairing $\tr( \_ \circ \_)$.
As the supports of $\FF_1$ and $\FF_2$ are transversal, we get $W=\C^4$. 
Since $C_1$ is a rational curve, $U_1=H^0(\NN_{C_1/S}) \oplus \Ext^1_{C_1}(\FF_1,\FF_1)=0$.
Since $C_2$ has genus 1, $\FF_2$ can be deformed either as a sheaf in $J^{-2}(C_2)$ or by varying its support, the two possibilities corresponding to the two summands in $U_2=H^0(\NN_{C_2/S}) \oplus \Ext^1_{C_2}(\FF_2,\FF_2)=\C^2.$ 
So 
\begin{equation}\label{uw}
\Ext^1_S(\FF,\FF)= U_1 \times U_2 \times W \times W^*=\C^{10}.
\end{equation}
Choosing coordinates $x_1,...,x_4$ in $W$ such that $\tau$ exchanges $x_1 \leftrightarrow x_2$ and $x_3 \leftrightarrow x_4$, let $y_1,...,y_4$ be the dual coordinates in $W^*$. 
Let $z_1,z_2$ be coordinates in $U_2$. 

$\Aut(\FF)=\Aut(\FF_1) \times \Aut(\FF_2)={\C^*}^2$ by the stability of $\FF_i$, hence $G=\C^*$. 
Moreover $G$ acts trivially on $U_2$, while on $W \times W^*$  
$$
(\lambda_1,\lambda_2) \cdot (\underline{x},\underline{y})=(\lambda_1 \lambda_2^{-1} \underline{x},\lambda_1^{-1} \lambda_2 \underline{y}) \mbox{ for } (\lambda_1,\lambda_2) \in \Aut(\FF), 
$$
so its action on $\Ext^1_S(\FF,\FF)$ is 
\begin{equation}\label{zxy}
\lambda \cdot (\underline{z},\underline{x},\underline{y})=(\underline{z},\lambda \underline{x},\lambda^{-1}\underline{y}), \mbox{ where } \lambda=\lambda_1/\lambda_2. 
\end{equation}
By (\ref{zxy}) and (\ref{uw})
\begin{equation}\label{//G}
\Ext^1_S(\FF,\FF)//G= U_2 \times (W \times W^*)//G.
\end{equation}
The algebra of invariants of the action of $G$ on $\P(W \times W^*)$ is generated by the quadratic monomials 
\begin{equation}\label{coordu}
u_{ij}=x_i y_j,
\end{equation}
and the generating relations are the quadratic ones 
\begin{equation}\label{quad}
u_{ij}u_{kl}=u_{kj}u_{il}.
\end{equation}
So (\ref{//G}) gives (\ref{i)}).

\item[$ii)$]
Since $C_i$ is $\tau$-invariant for $i=1,2$, we have by (\ref{//G})
\begin{equation} \label{prodotto}
(\Ext^1_S(\FF,\FF)//G)^{\eta^*}=U_1^{\eta^*} \times U_2^{\eta^*} \times ((W \times W^*)//G)^{\eta^*}.
\end{equation}
Considering $\PP_{C_i}$, we get the natural identification 
\begin{equation}\label{tangent}
U_i^{\eta^*}=C_{[\FF_i]}(\PP_{C_i})=T_{[\FF_i]}(\PP_{C_i}),
\end{equation}
because $\FF_i$ is a stable and hence smooth point of $\PP_{C_i}$.
It remains to study the last term of (\ref{prodotto}). 
The involution $j$ exchanges $x_i \leftrightarrow y_i,$ thus the action of $\eta^*$ on $W \times W^*$ is 
$$
\eta^*(\underline{x},\underline{y})=(y_2,y_1,y_4,y_3,x_2,x_1,x_4,x_3).
$$

Its fixed locus is then 
\begin{equation}\label{fixxxx}
x_1=y_2,x_2=y_1,x_3=y_4,x_4=y_3.
\end{equation}

Since 
\begin{equation}\label{etaacts}
\eta^*(\lambda \cdot (\underline{x},\underline{y}))= \frac{1}{\lambda}\eta^*(\underline{x},\underline{y}),
\end{equation}
$\eta^*$ induces a well-defined involution on $(W \times W^*)//G$. 
From (\ref{etaacts}), we see that $\eta^*$ is not $G$-invariant, hence its fixed locus on the quotient cannot be described as the quotient of its fixed locus. 
To characterize it, we observe that in the $G$-invariant coordinates $u_{ij}$ of (\ref{coordu}), (\ref{fixxxx}) becomes 
\begin{equation}\label{these}
u_{11}=u_{22}, u_{33}=u_{44}, u_{13}=u_{42}, u_{14}=u_{32}, u_{23}=u_{41}, u_{24}=u_{31}.
\end{equation}
The quadratic relations (\ref{quad}), combined with (\ref{these}), give the equations of the image of the Veronese embedding $v_2:\P^{3} \to \P^{9}.$ 
Thus 
\begin{equation}\label{luogofisso}
((W \times W^*)//G)^{\eta^*}=\widehat{\Imm(v_2)}
\end{equation}
where $\widehat{\Imm(v_2)}$ is the affine cone over $\Imm(v_2)$. 
Moreover 
\begin{equation}\label{veronese}
\widehat{\Imm(v_2)} = \C^{4} / \pm 1.
\end{equation} 
Indeed, choosing coordinates $w_1,...,w_{4}$ on $\C^{4}$ with the action of $- 1$ given by $w_i \mapsto -w_i$, the algebra of invariant functions has generators 
\begin{equation} \nonumber
v_{ij}:=w_i w_j \mbox{ for } i \leq j,
\end{equation}
and relations 
\begin{equation} \nonumber
v_{ij}v_{kl}=v_{kj}v_{il},
\end{equation} 
which describe exactly $\widehat{\Imm(v_2)}$.

Thus combining (\ref{prodotto}), (\ref{luogofisso}) and (\ref{veronese}), we obtain (\ref{ii)}). 
\end{itemize}
\end{proof}

\begin{cor}\label{nosympl}
$\PP$ does not admit any symplectic desingularization.
\end{cor}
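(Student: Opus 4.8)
The plan is to reduce the statement to the single quotient germ $\C^{4}/\pm 1$ and to invoke the principle that a variety with $\Q$-factorial terminal singularities admits no crepant resolution. First I would note that $\PP$ is normal with a symplectic form on $\PP_{smooth}$ (Theorem \ref{Psympl}), so $K_{\PP}$ is trivial as a Weil divisor class; consequently any symplectic desingularization $f\colon\tilde{\PP}\to\PP$ is crepant, since the symplectic form on $\PP_{smooth}$ pulls back and extends to a nowhere-degenerate $2$-form on the symplectic manifold $\tilde{\PP}$, and comparing the resulting trivializations of $K_{\tilde{\PP}}$ and $f^{*}K_{\PP}$ forces all discrepancies to vanish. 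Therefore it is enough to produce one $\Q$-factorial terminal singularity on $\PP$: over such a germ a crepant resolution would have no exceptional divisors (terminality forbids discrepancy $0$), hence be small, hence an isomorphism by $\Q$-factoriality, contradicting the presence of the singularity.

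Next I would make the reduction local. A symplectic desingularization of $\PP$ restricts, over an analytic neighbourhood of a generic point of $Sing(\PP)$, to a symplectic resolution of the local model of $\PP$ there; by Theorem \ref{singgen} this model is $\C^{2}\times(\C^{4}/\pm 1)$. Since the factor $\C^{2}$ is smooth, a symplectic resolution of the product is equivalent, along the transverse slice, to one of $\C^{4}/\pm 1$. Thus the whole question collapses to whether the germ $\C^{4}/\pm 1$ admits a symplectic resolution.

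It then remains to verify that $\C^{4}/\pm 1$ is $\Q$-factorial and terminal. $\Q$-factoriality is automatic because every finite quotient singularity is $\Q$-factorial. For terminality I would resolve by blowing up the origin of $\C^{4}$ and taking the quotient: as $-1$ acts trivially on the exceptional $\P^{3}$ and by $-1$ on its normal line bundle, the quotient is the smooth total space of $\OO_{\P^{3}}(-2)$, with a single exceptional divisor $E\cong\P^{3}$ of discrepancy $1>0$ (equivalently, $\C^{2n}/\pm 1$ is terminal for $n\ge 2$ by the Reid--Tai criterion). A strictly positive discrepancy means $\C^{4}/\pm 1$ has no crepant, hence no symplectic, resolution, and by the reduction above neither does $\PP$.

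The step I expect to be the main obstacle is the local-to-global reduction rather than the numerics: one must argue carefully that a global symplectic desingularization really induces a symplectic resolution of the analytic germ at a generic point of $Sing(\PP)$, and that crepancy survives this restriction. Once this is secured, the conclusion follows from the terminal/$\Q$-factorial dichotomy recalled above, in line with Namikawa's framework \cite{Na}.
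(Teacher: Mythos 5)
Your proposal is correct and follows essentially the same route as the paper: both reduce to the local model $\C^2\times(\C^4/\pm 1)$ from Theorem \ref{singgen} and conclude from the fact that $\C^4/\pm 1$ is $\Q$-factorial and terminal, so that no resolution can be crepant (hence none symplectic). You merely spell out two points the paper leaves implicit --- that a symplectic desingularization is necessarily crepant, and the explicit discrepancy computation (discrepancy $1$ on the exceptional $\P^3$ of the blowup quotient) --- both of which are correct.
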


\begin{proof}
By Theorem \ref{singgen}, $\PP$ is locally analytically isomorphic to $\C^2 \times (\C^4/ \pm 1)$ around a polystable sheaf $\FF=\FF_1 \oplus \FF_2$. 
The singularity $\C^4/\pm 1$ is $\Q$-factorial, so it has no small resolutions, and terminal, that is the canonical sheaf of any resolution of singularities contains all the exceptional divisors with strictly positive coefficients.
Thus none of the resolutions has trivial canonical class, and none of them is symplectic.
\end{proof}

\begin{remark}
It is interesting to note that $\JJ$ admits a symplectic resolution while $\PP$ does not.
\end{remark}

\begin{thm}\label{singpar}
At a polystable sheaf $\FF=\OO_{C^1}(-2) \oplus \OO_{C^2}(-2) \oplus \OO_{C^3}(-2)$ as in (\ref{f1f2f3}), $C_{[\FF]}(\PP)$  is locally analytically equivalent to $\C^6 / \Z_2 \times \Z_2$, where the action on $\C^6$ is given by 
\begin{equation}\label{z2z2}
\Z_2 \times \Z_2 = <(1,1,-1,-1,-1,-1),(-1,-1,1,1,-1,-1)>.
\end{equation} 
\end{thm}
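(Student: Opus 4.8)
The plan is to follow the strategy of Theorem~\ref{singgen}, adapting the two-summand computation to three summands. Writing $\FF_i=\OO_{C^i}(-2)$, I would first compute $\Ext^1_S(\FF,\FF)$ together with the group $G=\PAut(\FF)$. Each $C^i$ being a smooth rational $(-2)$-curve, the diagonal blocks $\Ext^1_S(\FF_i,\FF_i)=H^0(\NN_{C^i/S})\oplus\Ext^1_{C^i}(\FF_i,\FF_i)$ vanish exactly as the term $U_1$ did in Theorem~\ref{singgen}; since $C^i$ and $C^j$ meet transversally in two points, each off-diagonal block $W_{ij}:=\Ext^1_S(\FF_i,\FF_j)\cong\C^2$, with $W_{ji}$ its Serre dual. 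Hence $\Ext^1_S(\FF,\FF)=\bigoplus_{i\neq j}W_{ij}\cong\C^{12}$, and $G=(\C^*)^3/\C^*\cong(\C^*)^2$ acts on $W_{ij}$ with weight $\lambda_i\lambda_j^{-1}$.

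Next I would pin down $\eta^*$ on this space. As in Theorem~\ref{singgen}, $\tau^*$ preserves each $W_{ij}$ and exchanges the two coordinates indexing the two points of $C^i\cap C^j$ (these lie over the single node $\ell^i\cap\ell^j\in Y$ and are swapped by the covering involution $\tau$), while $j$ interchanges $W_{ij}$ and $W_{ji}$ through the Serre pairing. Thus $\eta^*=j\circ\tau^*$ sends $W_{ij}$ to $W_{ji}$ after a coordinate swap, and a weight check gives the analogue of (\ref{etaacts}), namely $\eta^*(g\cdot v)=g^{-1}\cdot\eta^*(v)$. Its fixed locus $V:=\Ext^1_S(\FF,\FF)^{\eta^*}\cong\C^6$ is then parametrised by one coordinate pair from each of $W_{12},W_{13},W_{23}$.

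I would then invoke Theorem~\ref{Kuran} to write the chain of inclusions $C_{[\FF]}(\PP)\subseteq C_{[\FF]}(\JJ)^{\eta^*}\subseteq(\Ext^1_S(\FF,\FF)//G)^{\eta^*}$ as in (\ref{inclusions}), reducing the statement to the identification of the outer term. Here the relation $\eta^*(g\cdot v)=g^{-1}\cdot\eta^*(v)$ shows that $\eta^*$ descends to the quotient and that $(\Ext^1_S(\FF,\FF)//G)^{\eta^*}$ is the image of $V$; two points of $V$ have the same image precisely when they differ by some $g\in G$ with $gV=V$, and since $G$ acts with trivial generic stabiliser on $V$ this forces $g^2=1$, i.e. $g\in G[2]\cong\Z_2\times\Z_2$. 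Therefore $(\Ext^1_S(\FF,\FF)//G)^{\eta^*}\cong V/G[2]$. A direct weight computation shows that $g=(\epsilon_1,\epsilon_2,\epsilon_3)\in G[2]$ scales the three coordinate pairs of $V$ by $(\epsilon_1\epsilon_2,\epsilon_1\epsilon_3,\epsilon_2\epsilon_3)$, whose product is always $+1$; since the scalar $(-1,-1,-1)$ then acts trivially, the induced $\Z_2\times\Z_2=G[2]$-action on $V\cong\C^6$ is precisely (\ref{z2z2}). Finally, $V/G[2]$ is irreducible of dimension $6=\dim\PP$ and contains the pure $6$-dimensional tangent cone $C_{[\FF]}(\PP)$, so the inclusions collapse to equalities and $C_{[\FF]}(\PP)\cong\C^6/\Z_2\times\Z_2$ as claimed.

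The step I expect to be most delicate is the precise determination of $\eta^*$: one must check that $\tau$ genuinely swaps the two points of each $C^i\cap C^j$ for the generic cubic (equivalently that $\ell^i\cap\ell^j$ avoids the branch curve) and that $j$ induces the Serre-duality identification of $W_{ij}$ with $W_{ji}$ with matching labels, so that the residual group comes out as the asserted even-sign subgroup and not some other order-four subgroup of the block sign changes. The reassuring point is that the ``triangle'' relation among the three weights forces the product of the block-signs to be $+1$ independently of these choices, which rigidifies the outcome. As an alternative that mirrors Theorem~\ref{singgen} literally, one can instead write down the ring of $G$-invariants (the twelve degree-two block monomials $x_iy_j$ together with the eight degree-three ``triangle'' monomials), impose the fixed-locus relations coming from (\ref{fixxxx}), and recognise the resulting algebra as the invariant ring of (\ref{z2z2}); this is the more computational but more self-contained route.
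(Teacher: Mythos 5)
Your proposal is correct in substance and its first half (the computation of $\Ext^1_S(\FF,\FF)\cong\C^{12}$ as the sum of the six off-diagonal blocks $W_{ij}\cong\C^2$, the weights of $G=(\C^*)^2$, and the description of $\eta^*$ with its twisted equivariance $\eta^*(g\cdot v)=g^{-1}\cdot\eta^*(v)$) coincides with the paper's. Where you genuinely diverge is the identification of $(\Ext^1_S(\FF,\FF)//G)^{\eta^*}$: the paper writes down the full invariant ring (the twelve quadratic monomials $u_{ij}^{kl}=x_{ij}^ky_{ij}^l$, the sixteen cubic ``triangle'' monomials $v^{klm},w^{klm}$, and their relations), substitutes the fixed-point equations, and matches the resulting algebra generator-by-generator with the invariant ring of the action (\ref{z2z2}); you instead argue conceptually that the fixed locus downstairs is $V/G[2]$ with $V=\Ext^1_S(\FF,\FF)^{\eta^*}\cong\C^6$ and $G[2]\cong\Z_2\times\Z_2$, and then read off the action from the weights. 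Your route is shorter and explains structurally \emph{why} a $\Z_2\times\Z_2$-quotient appears (and why the product of the three block-signs is $+1$), which the paper's computation obscures.

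Two steps of your main route need to be shored up before it is a complete proof. First, surjectivity of $V\to(\Ext^1//G)^{\eta^*}$ is not automatic --- the paper itself warns that the fixed locus of the quotient is not the quotient of the fixed locus; it does hold here, but the argument is that an $\eta^*$-invariant closed orbit $Gp$ satisfies $\eta^*(p)=g_0p$, and since the torus is divisible one can choose $h$ with $h^2=g_0$ so that $hp$ is $\eta^*$-fixed (one should also check that every point of $V$ has closed orbit, which follows because $x_{ij}=0$ iff $y_{ij}=0$ on $V$). Second, your fibre analysis via ``trivial generic stabiliser'' only shows that $V/G[2]\to(\Ext^1//G)^{\eta^*}$ is a finite morphism that is bijective at generic points; to conclude the stated analytic equivalence you need an isomorphism, i.e.\ that the restriction of $\C[\Ext^1]^G$ to $V$ is all of $\C[V]^{G[2]}$, not a proper subring with the same normalization (on the non-generic strata the stabilisers are positive-dimensional subtori, so the argument ``$g^2\in\mathrm{Stab}(v)$ forces $g\in G[2]$'' must be replaced by $g\in G[2]\cdot\mathrm{Stab}(v)$, which holds since those stabilisers are connected). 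The cleanest fix is exactly the alternative you name at the end --- the explicit generators-and-relations check, which is the paper's proof --- so your fallback is sound; but as written the main argument stops one step short of the ring-level identification that the theorem requires.
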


\begin{proof}
To deduce the thesis, we show that (\ref{inclusions}) is a sequence of identities in the following two steps 
\begin{itemize}
\item[$i)$] 
\begin{equation}\label{ib)}
\Ext^1_S(\FF,\FF) // G = Z,
\end{equation} 
where $Z$ is the affine cone over the variety described by the equations (\ref{uij01}), (\ref{vklm}), (\ref{wklm}), (\ref{vwu}).

\item[$ii)$] 
\begin{equation}\label{iib)}
(\Ext^1_S(\FF,\FF)//G)^{\eta^*}=\C^6 / \Z_2 \times \Z_2,
\end{equation} 
where the action is as in (\ref{z2z2}).
\end{itemize} 

\begin{itemize}
\item[$i)$] 
We set
$$\begin{aligned} 
W_{12}:=\Ext^1_S(\OO_{C^1}(-2),\OO_{C^2}(-2)), \ \ & 
\quad W_{21}:=\Ext^1_S(\OO_{C^2}(-2),\OO_{C^1}(-2)), \\ 
W_{13}:= \Ext^1_S(\OO_{C^1}(-2),\OO_{C^3}(-2)) , \ \ & 
\quad W_{31}:= \Ext^1_S(\OO_{C^3}(-2),\OO_{C^1}(-2)) , \\
W_{23}:= \Ext^1_S(\OO_{C^2}(-2),\OO_{C^3}(-2)), \ \ &
\quad W_{32}:= \Ext^1_S(\OO_{C^3}(-2),\OO_{C^2}(-2)).
\end{aligned}
$$
By Serre duality, $W_{ij}=W_{ji}^*$. 
Since the supports of $\OO_{C^i}(-2)$ and $\OO_{C^j}(-2)$ are transversal for $i \neq j$, we get $W_{ij}=\C^{C^1 \cdot C^2}=\C^2.$
So
$$\Ext^1_S(\FF,\FF)= W_{12} \times W_{13} \times W_{23} \times W_{12}^* \times W_{13}^* \times W_{23}^* = \C^{12}.$$

Choosing coordinates $x_{ij}^0,x_{ij}^1$ in $W_{ij}$ such that $\tau(x_{ij}^0)=x_{ij}^1$, let $y_{ij}^0,y_{ij}^1$ be the dual ones in $W^*_{ij}$. 

By the stability of $\FF_i$, $\Aut(\FF)={\C^*}^3$, hence $G:=\PAut(\FF)={\C^*}^2$.  Setting $\epsilon_1:=\lambda_1/\lambda_2, \epsilon_2:=\lambda_2/\lambda_3$ for $(\lambda_1,\lambda_2,\lambda_3) \in \Aut(\FF)$, its action on $\Ext^1_S(\FF,\FF)$ is 
$$
(\epsilon_1,\epsilon_2) \cdot (x_{ij}^k,y_{ij}^k)=(\epsilon_1 x_{12}^k, \epsilon_1 \epsilon_2 x_{13}^k,\epsilon_2 x_{23}^k, \epsilon_1^{-1} y_{12}^k, \epsilon_1^{-1} \epsilon_2^{-1} y_{13}^k,\epsilon_2^{-1} y_{23}^k).
$$
The algebra of invariants of the action of $G$ on $\P (\Ext^1_S(\FF,\FF))$ is generated by the 12 quadratic monomials 
\begin{equation}\label{star}
u_{ij}^{kl}:=x_{ij}^k y_{ij}^{l} \qquad i < j,
\end{equation} 
and by the 16 cubic monomials 
\begin{equation}\label{starstar}
v^{klm}:=x_{13}^k y_{12}^l y_{23}^m, \qquad w^{klm}:=y_{13}^k x_{12}^l x_{23}^m.
\end{equation} 

Its generating relations are the 3 equations in $u_{ij}^{kl}$
\begin{equation}\label{uij01} 
u_{ij}^{00} u_{ij}^{11}=u_{ij}^{01} u_{ij}^{10},
\end{equation} 
the 18 equations in $v^{ijk}$, $w^{ijk}$
\begin{align}\label{vklm} 
v^{klm} v^{k'l'm'}=v^{k'lm} v^{kl'm'}=v^{kl'm} v^{k'lm'}=v^{klm'} v^{k'l'm}, \quad \\
\label{wklm}
w^{klm} w^{k'l'm'}=w^{k'lm} w^{kl'm'}=w^{kl'm} w^{k'lm'}=w^{klm'} w^{k'l'm}, 
\end{align} 
and the 64 cubic equations
\begin{align}\label{vwu}
v^{klm} w^{k'l'm'}=u_{13}^{kk'} u_{12}^{ll'} u_{23}^{mm'}.
\end{align}
Hence we deduce (\ref{ib)}).

\item[$ii)$] The action of $j$ is $x_{ij}^k \leftrightarrow y_{ij}^k$, so that
$$
\eta^*(x_{ij}^k,y_{ij}^k)=(y_{12}^1,y_{12}^0,y_{13}^1,y_{13}^0,y_{23}^1,y_{23}^0,x_{12}^1,x_{12}^0,x_{13}^1,x_{13}^0,x_{23}^1,x_{23}^0).
$$
Its fixed locus is then 
\begin{equation}\label{lalala}
y_{12}^1=x_{12}^0,  y_{12}^0=x_{12}^1,  y_{13}^1=x_{13}^0,  y_{13}^0=x_{13}^1,  y_{23}^1=x_{23}^0,  y_{23}^0=x_{23}^1.
\end{equation}

Since
\begin{equation*}
\eta^*((\epsilon_1,\epsilon_2) \cdot (x_{ij}^k,y_{ij}^k))=\left(\frac{1}{\epsilon_1}, \frac{1}{\epsilon_2} \right) \eta^* (x_{ij}^k,y_{ij}^k),\end{equation*}
$\eta^*$ induces a well defined involution on $\Ext^1_S(\FF,\FF)//G$. 
As $\eta^*$ is not $G$-invariant, its fixed locus on the quotient cannot be expressed as quotient of its fixed locus. 
We can describe it using the invariant coordinates $u_{ij}^{kl},v^{klm},w^{klm}$. 
Substituing (\ref{lalala}) in (\ref{star}) and (\ref{starstar}), we see that the fixed locus of $\eta^*$ is 
\begin{align}\label{sisi}
u_{ij}^{00} & = u_{ij}^{11}, \\
\label{sisisi}
v^{klm} & = w^{1-k,1-l,1-m}.
\end{align}

So the function algebra of $(\Ext^1_S(\FF,\FF) // G)^{\eta^*}$ has the 9 coordinate functions $u_{ij}^{00}, u_{ij}^{01}, u_{ij}^{10}$ and the 8 coordinate functions $v^{klm}$ as generators. 
Using (\ref{sisi}) and (\ref{sisisi}), the relations (\ref{uij01}) give the 3 equations
\begin{equation}\label{iaia}
(u_{ij}^{00})^2=u_{ij}^{01} u_{ij}^{10},
\end{equation}
the relations (\ref{vwu}) give the 36 equations
\begin{equation}\label{iaiaia}
v^{klm} v^{k'l'm'} = u_{13}^{k,1-k'} u_{12}^{l,1-l'} u_{23}^{m,1-m'},
\end{equation}
while (\ref{vklm}) and (\ref{wklm}) follow from (\ref{iaia}) and (\ref{iaiaia}).

These equations describe the quotient (\ref{iib)}). 
Indeed choosing coordinates $r^0_1,r^1_1,r^0_2,r^1_2,r^0_3,r^1_3$, in which the action is given by (\ref{z2z2}), the algebra of invariant functions is generated by the 9 quadratic monomials
$$s_i^{jk}:=r^j_i r^k_i$$
and by the 8 cubic monomials 
$$t^{ijk}:=s^i_1 s^j_2 s^k_3,$$
with the 3 quadratic relations
$$s_i^{01}=s_i^{00} s_i^{11},$$
and the 36 cubic ones
$$t^{ijk} t^{i'j'k'} = s_1^{ii'}s_2^{jj'}s_3^{kk'}.$$
\end{itemize} 
\end{proof}

\begin{cor}\label{singloc}
The singular locus of $\PP$ coincides with the locus of $\eta$-invariant strictly semistable sheaves and consists only of quotient singularities.
\end{cor}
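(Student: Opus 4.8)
The plan is to prove the two asserted inclusions and then read off the local structure. For the inclusion $Sing(\PP) \subseteq \{\eta\text{-invariant strictly semistable sheaves}\}$, I would invoke what has already been observed before the preceding Lemma: by Theorem \ref{Muk} the stable locus of $\JJ$ is smooth, and on it $\eta = j \circ \tau^*$ is a regular involution (Lemmas \ref{taustar} and \ref{defj}). Since the fixed locus of a holomorphic involution on a complex manifold is smooth, $\PP = \Fix^0(\eta)$, being a connected component of $\Fix(\eta)$, is smooth at every stable sheaf. Hence any singular point of $\PP$ is a strictly semistable sheaf, necessarily $\eta$-invariant because it lies on $\PP$, and these are exactly the sheaves described in the preceding Lemma.

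For the reverse inclusion I would use Theorems \ref{singgen} and \ref{singpar}. At any $\eta$-invariant strictly semistable sheaf $\FF$ — of type (\ref{f1f2}) at a smooth point of $Sing(\PP)$, or of type (\ref{f1f2f3}) at a triple intersection point — these theorems compute the tangent cone $C_{[\FF]}(\PP)$ to be $\C^2 \times (\C^4/\pm 1)$ or $\C^6/(\Z_2 \times \Z_2)$. Neither of these cones is a linear subspace: $\C^4/\pm 1$ is the affine cone over the second Veronese of $\P^3$ and is singular at the origin, and, since the action (\ref{z2z2}) contains no pseudo-reflections, $\C^6/(\Z_2 \times \Z_2)$ is singular at the origin as well. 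A point at which the tangent cone fails to be smooth is a singular point of the variety, so every such $\FF$ lies in $Sing(\PP)$; combined with the first paragraph this yields the equality of the two loci. The quotient-singularity statement is then immediate, both local models being by construction quotients of affine space by a finite group.

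The one point requiring care — and the step I expect to be the main obstacle — is upgrading the tangent-cone computations of Theorems \ref{singgen} and \ref{singpar} to a description of the germ $(\PP,\FF)$ itself, exactly as is used implicitly in the proof of Corollary \ref{nosympl}. The proofs of those theorems establish not merely an inclusion but the full chain of equalities in (\ref{inclusions}), so that $C_{[\FF]}(\PP)$ coincides with the entire ambient model $(\Ext^1_S(\FF,\FF)//G)^{\eta^*}$ furnished by the Kuranishi map (Theorem \ref{Kuran}). Since the tangent cone has the same dimension as $\PP$ at $\FF$, and the germ of $\PP$ embeds in the irreducible reduced cone $(\Ext^1_S(\FF,\FF)//G)^{\eta^*}$ of that same dimension, the two germs must coincide. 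Therefore $(\PP,\FF)$ is analytically isomorphic to the quotient model, which is what makes each singularity an honest quotient singularity rather than merely a point whose tangent cone happens to be a quotient. Once this is in place, the corollary follows by assembling the two inclusions with the explicit local models.
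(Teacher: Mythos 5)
Your proposal is correct and follows essentially the same route as the paper, whose proof simply cites Theorems \ref{singgen} and \ref{singpar} together with the observation, made at the start of Section 3, that $\PP$ is smooth along the stable locus. Your extra care in upgrading the tangent-cone computations to the analytic germ (via the equality chain (\ref{inclusions}) and a dimension count inside the irreducible quotient model) is a detail the paper leaves implicit but uses, e.g.\ in Corollary \ref{nosympl}, so it is a welcome addition rather than a deviation.
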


\begin{proof}
The identification of the singular locus with the strictly $C$-semistable sheaves and the analysis of the type of singularities follow from Theorem \ref{singgen} and Theorem \ref{singpar}.
\end{proof}

\section{Simple connectedness and irreducibility of $\PP$}
In this section we prove that $\PP$ is simply connected and the $H^{(2,0)}$ of its smooth locus is generated by the symplectic form.
We deduce this describing a birational model of $\PP$ as a quotient of $S^{[3]}$ by an involution.
$ \\ $

\begin{lm}
There exists a rational map
\begin{align} \nonumber
\psi: & S^{[3]} \dashrightarrow \PP \\ \nonumber
& \xi \mapsto (1-\tau)\xi
\end{align}
\end{lm}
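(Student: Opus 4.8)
The plan is to build the rational map $\psi$ by first defining it naturally on an open subset of $S^{[3]}$ and then checking that its image lies in the correct connected component $\PP = \Fix^0(\eta)$. Given a length-$3$ subscheme $\xi \subset S$, the expression $(1-\tau)\xi$ should be read as the class of the divisor (or rather the rank-$0$ sheaf) $\OO_\xi - \tau^*\OO_\xi$, naturally a point in a relative Jacobian. The first step is to make this precise: for a generic $\xi$ whose support avoids the ramification of $\tau$ and whose image $\pi(\xi)$ imposes independent conditions, the $\tau$-antiinvariant combination $\OO_\xi \otimes (\tau^*\OO_\xi)^{-1}$ determines a degree-$0$ line bundle on a $\tau$-invariant curve $C \in \pi^*|{-K_Y}|$ passing through $\xi \cup \tau\xi$. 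I would first verify such a $\tau$-invariant curve through the relevant points exists and is generically smooth, so that the target fiber is an honest Jacobian.

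The second step is to identify the target component. By construction $(1-\tau)\xi$ is antiinvariant, i.e.\ fixed by $\tau^*$ composed with $-1$; since Lemma \ref{defj}$(i)$ gives $j = -1$ fiberwise over smooth curves and $\eta = j \circ \tau^*$, the point $(1-\tau)\xi$ is $\eta$-fixed. I would then argue that it lands in the \emph{identity} component $\Fix^0(\eta) = \PP$ rather than another component of the fixed locus: the key observation is that $\psi$ is defined on the irreducible variety $S^{[3]}$ and extends continuously through the diagonal-type loci where $\xi$ meets the ramification divisor, where $(1-\tau)\xi$ degenerates to the trivial (zero) sheaf lying in $\PP$. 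Continuity from a connected source whose closure meets the zero section forces the image into the single component containing zero.

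The main obstacle I expect is not the set-theoretic definition but controlling the indeterminacy locus and the behaviour of $\psi$ over non-generic $\xi$ — precisely the configurations where $\pi(\xi)$ fails to determine a smooth $\tau$-invariant curve, or where $\xi$ meets $\tau\xi$. There the antiinvariant sheaf may become supported on a non-integral curve, landing in the strictly semistable (hence singular) locus analysed in Theorems \ref{singgen} and \ref{singpar}. Since the statement only claims $\psi$ is a \emph{rational} map, I would simply restrict to the open locus where the construction is regular, defer the extension question, and record that $\psi$ is dominant by a dimension and degree count: both $S^{[3]}$ and $\PP$ have dimension $6$, and the generic fiber of $\psi$ has order $2$ (the ambiguity $\xi \leftrightarrow \tau\xi$), consistent with the ``rational double cover'' picture advertised in the section introduction. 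The careful treatment of indeterminacy and the degree-$2$ claim would then be addressed in the results that follow this lemma.
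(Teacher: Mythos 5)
Your approach is essentially the paper's: for generic $\xi$ the unique $\tau$-invariant curve of $\pi^*|-K_Y|$ through $\xi$ is the hyperplane section $C_\xi=\langle \xi,p_0\rangle\cap S$ (the $\tau$-invariant hyperplanes are exactly those through the vertex $p_0$, and three generic points of $S$ together with $p_0$ span a hyperplane), so that $\xi-\tau(\xi)$ is a degree-zero class in $P(C_\xi,\tau)$; this identification is the one verification you flag but leave undone, and it is the entire geometric content of the paper's short proof. Your argument for landing in the identity component (connectedness of the source together with degeneration to the zero section when $\xi$ lies on the ramification curve) is sound and in fact more explicit than the paper's bare assertion that $\xi-\tau(\xi)\in P(C_\xi,\tau)$.

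One caution about your closing aside: the generic fiber of $\psi$ is not $\{\xi,\tau\xi\}$. Indeed $(1-\tau)(\tau\xi)=-(1-\tau)\xi$, so $\tau\xi$ maps to the inverse point, not to the same one. The actual deck involution is $\iota_2=\iota_1\circ\tau$ with $\iota_1(\xi)=\langle\xi\rangle\cap S-\xi$, and showing that the generic fiber is exactly $\{\xi,\iota_2(\xi)\}$ is the content of Theorem \ref{ghj}, which requires a genuine case analysis on the linear system $|\xi+\tau(\xi')|$. Since the lemma claims only the existence of the rational map, this does not invalidate your proof of the statement at hand, but the heuristic ``ambiguity $\xi\leftrightarrow\tau\xi$'' would lead you astray in the results that follow.
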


\begin{proof}
For a generic $\xi \in S^{[3]}$, $C_{\xi}:=\langle \xi,p_0 \rangle \cap S$ is a generic element of $\pi^*|-K_{Y}|$ and $C'_{\xi}:= \langle \pi (\xi) \rangle \cap Y$ is a generic member of $|-K_{Y}|$. 
Indeed, a $\tau$-invariant hyperplane contains $p_0$, so it is given by $p_0$ and 3 other points.
Hence generically $\xi \in J^3(C_{\xi})$, and $\xi - \tau (\xi) \in P(C_{\xi}, \tau)$.
\end{proof}

\begin{remark}
The indeterminacy locus of $\psi$ is given by $\xi$ such that $\dim \langle \xi \rangle <3$, i.e. $\langle \xi \rangle$ is a line. 
A line meeting $S$ in 3 points, meets also $Z_2$ in 3 points, so it lies on $Z_2$. 
Vice versa, a line on $Z_2$ clearly meets $Z_3$ in 3 points, so it also meets $S$ in these 3 points. 
Hence 
$$\Indet(\psi)=\{ \mbox{lines in } Z_2\}= \P^3.$$
\end{remark}

\begin{lm}[Proposition 4.1 \cite{O4}]
The involution 
\begin{align}
\iota_1: & S^{[3]} \dashrightarrow S^{[3]} \\ \nonumber
& \xi \mapsto ( \langle \xi \rangle \cap S )- \xi.
\end{align} 
is antisymplectic.
\end{lm}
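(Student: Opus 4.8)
The plan is to exploit the birational double-cover structure relating $S^{[3]}$ to the relative Prym variety $\PP$, together with the explicit geometric description of $\iota_1$ as the ``residual'' involution on the linear span. First I would make precise the geometry underlying $\iota_1$: for a generic length-3 subscheme $\xi$, the span $\langle \xi \rangle$ is a plane in $\P^4$ (since generically $\dim \langle \xi \rangle = 3$, as noted in the preceding remark) which meets $S$ in a degree-$6$ divisor, i.e. a curve $C_\xi$ cut out inside $S$; more precisely $\langle \xi \rangle \cap S$ is a length-6 subscheme of which $\xi$ is half, and $\iota_1(\xi) = (\langle \xi \rangle \cap S) - \xi$ is the complementary length-3 subscheme. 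The key observation is that this residuation is an involution precisely because $\langle \xi \rangle \cap S$ is symmetric under the hyperelliptic-type correspondence coming from projection from $p_0$, and that the symplectic form on $S^{[3]}$ is $H^0(S,\Omega^2_S)^{\otimes 3}$-induced via the Hilbert--Chow morphism.

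Concretely, I would compute the action of $\iota_1^*$ on $H^{2,0}(S^{[3]})$, which is one-dimensional and generated by the induced symplectic form $\sigma^{[3]}$ coming from the symplectic form $\sigma$ on $S$. Since $H^{2,0}(S^{[3]}) \cong H^{2,0}(S)$ canonically, it suffices to track how $\iota_1$ interacts with $\sigma$. Here is where the double-cover structure $S \to Y$ enters: the residuation $\iota_1$ is built out of the projection from $p_0$, which is exactly the map $\pi$ realizing $S$ as the double cover of the cubic surface $Y$ with covering involution $\tau$. The correspondence $\xi \mapsto \langle \xi \rangle \cap S - \xi$ therefore factors through data that is $\tau$-equivariant, and the sign of $\iota_1^*$ on $\sigma^{[3]}$ should be read off from the fact that $\tau^*\sigma = -\sigma$ (established for a K3 double cover of a cubic surface, and used in the proof of Theorem~\ref{Psympl}). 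The natural expectation is that $\iota_1$ contributes one sign change from the residuation (an odd permutation of the two halves of a genus-4 hyperelliptic-type divisor) together with the antisymplectic behaviour of $\tau$, yielding $\iota_1^*\sigma^{[3]} = -\sigma^{[3]}$.

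The cleanest route, which I would adopt, is to compare $\iota_1$ with the involution $\eta = j \circ \tau^*$ on $\JJ$ through the rational map $\psi \colon S^{[3]} \dashrightarrow \PP$, $\xi \mapsto (1-\tau)\xi$. Since $\PP$ is defined as a fixed component of $\eta$ and its smooth locus is symplectic with $\eta^*\sigma = \sigma$, while $\psi$ should intertwine $\iota_1$ on the source with the fiberwise $-1$ (that is, $j$) on the Prym, one can transfer the sign computation to the already-understood action of $j$ on $H^{2,0}(\JJ^{stable})$, namely $j^*\sigma = -\sigma$. Establishing the compatibility $\psi \circ \iota_1 = j \circ \psi$ (as rational maps, on the locus where $\psi$ is defined) reduces the claim to the known antisymplecticity of $j$.

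The main obstacle I anticipate is verifying this intertwining relation carefully at the level of the Abel--Jacobi-type map $\xi \mapsto (1-\tau)\xi$: one must check that residuating $\xi$ inside $\langle \xi \rangle \cap S$ corresponds, after forming $\xi - \tau(\xi)$ and landing in the Prym, to negating the class, and this requires controlling the interaction between the span $\langle \xi \rangle$, the fixed point $p_0$, and the involution $\tau$ on a generic genus-4 curve $C_\xi$. Handling the indeterminacy locus $\Indet(\psi) = \{\text{lines in } Z_2\} = \P^3$ and confirming that the sign computation extends from the generic locus is the delicate point; since $H^{2,0}$ is a birational invariant for these varieties, it suffices to argue on the open dense locus where $\psi$ and $\iota_1$ are both defined, which should sidestep the worst of the degenerations.
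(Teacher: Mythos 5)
The paper offers no proof of this lemma: it is quoted verbatim from Proposition 4.1 of \cite{O4}, where the antisymplecticity of the residuation involution is established from the projective embedding $S\subset\P^4$ alone, independently of the double-cover structure. Your third paragraph therefore proposes a genuinely different, internal route: prove the intertwining $\psi\circ\iota_1=j\circ\psi$ and transfer the sign from $j^*\sigma=-\sigma$ (known from the proof of Theorem \ref{Psympl} via \cite{ASF}) back along the dominant, generically finite map $\psi$ of Theorem \ref{ghj}. This is logically admissible (the proof of Theorem \ref{ghj} does not use the present lemma, though you must not invoke Corollary \ref{h20}, which does), and it works. But the step you defer as ``the main obstacle'' is precisely the content of the proof, and it is short, so you should carry it out: $C_\xi=\langle\xi,p_0\rangle\cap S$ is a canonically embedded genus-$4$ curve in $\langle\xi,p_0\rangle\cong\P^3$, because $\OO_S(1)|_{C_\xi}=\OO_S(C_\xi)|_{C_\xi}=\omega_{C_\xi}$ by adjunction on a K3; hence $\langle\xi\rangle\cap S=\langle\xi\rangle\cap C_\xi$ is a canonical divisor, $\iota_1(\xi)\sim K_{C_\xi}-\xi$ on the \emph{same} curve, and since the hyperplane $\langle\xi,p_0\rangle$ is $\tau$-invariant one has $\tau^*K_{C_\xi}\sim K_{C_\xi}$, so $\psi(\iota_1(\xi))=(1-\tau)(K_{C_\xi}-\xi)=-(1-\tau)\xi=j(\psi(\xi))$. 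Since $\psi^*\sigma_{\PP}$ is a nonzero multiple of the symplectic form of $S^{[3]}$ (pullback of a nonvanishing holomorphic $2$-form under a dominant generically finite map, and $h^{2,0}(S^{[3]})=1$), this yields $\iota_1^*\sigma=-\sigma$.

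By contrast, your first two paragraphs contain claims that are wrong and should be removed. The involution $\iota_1$ is defined purely by the embedding of $S$ in $\P^4$; neither $p_0$ nor $\tau$ enters its definition. So the residuation is not an involution ``because of the projection from $p_0$'' (it is an involution because $\iota_1(\xi)$ spans the same plane as $\xi$), and the sign of $\iota_1^*$ on $H^{2,0}$ cannot be ``read off from $\tau^*\sigma=-\sigma$'': the antisymplectic involution $\tau$ is a property of the pair $(S,\tau)$, not of $\iota_1$. Likewise, ``an odd permutation of the two halves of the divisor'' is not a sign argument — permuting points has no effect on the form induced on the symmetric product. The correct mechanism, which your reduction to $j$ does capture, is that a fiberwise $-1$ on an isotropic/Lagrangian family of abelian varieties acts by $-1$ on fibre directions and $+1$ on base directions, hence by $-1$ on a form pairing the two. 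Either complete the argument through $\psi$ as above, or simply cite \cite{O4} as the paper does.
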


\begin{remark}
$\iota_1$ has the same indeterminacy locus of $\psi$
$$\Indet(\iota_1)=\{ \mbox{lines in } Z_2\}=\P^3.$$
Thus $\iota_1$ induces a regular involution on the blowup $\Bl(S^{[3]})$ of $S^{[3]}$ along the $\P^3$ given by the locus of lines in $Z_2$.

If we consider a generic $\xi \in \Indet(\iota_1)$, then $\langle \xi,p_0 \rangle \cong \P^2$ meets $S$ in 6 points, respectively $\xi$ and $\tau( \xi)$, and $\tau( \xi) \in \Indet(\iota_1)$. 
So also $\tau$ extends to an involution on $\Bl(S^{[3]})$.
\end{remark}

\begin{thm}\label{ghj}
$\psi$ is a rational double cover with involution $\iota_2$, where  $\iota_2$ is the involution induced by $\iota_1 \circ \tau$ on the blowup $\Bl(S^{[3]})$ of $S^{[3]}$ along the locus of lines in $Z_2$. 
Hence $M:=\Bl(S^{[3]})/ \iota_2$ is birational to $\PP$. 
\end{thm}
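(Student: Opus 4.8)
# Proof Proposal for Theorem \ref{ghj}

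The plan is to show that $\psi$ realizes $\PP$ as the quotient of $\Bl(S^{[3]})$ by the involution $\iota_2 = \iota_1 \circ \tau$, by checking that (a) $\psi$ is generically $2{:}1$, (b) the two points of a generic fiber are exactly $\xi$ and $\iota_2(\xi)$, and (c) $\psi$ is dominant, so that it factors through an open immersion of $\Bl(S^{[3]})/\iota_2$ into $\PP$ that is a birational map. First I would fix a generic $\xi \in S^{[3]}$ and work on the smooth curve $C_\xi = \langle \xi, p_0 \rangle \cap S$ of genus $4$, which carries the involution $\tau$ (the restriction of $x_4 \mapsto -x_4$) with quotient the genus-$1$ curve $C'_\xi = \langle \pi(\xi)\rangle \cap Y$. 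On such a curve the map $\psi$ sends $\xi$ to the class $(1-\tau)\xi = \xi - \tau(\xi)$, which lies in the Prym variety $P(C_\xi,\tau)$ by the defining property of the Prym. The key observation is that over the generic point $C$ of $|C|^\tau$, the fiber-wise statement reduces to a statement about the Abel--Prym map on a single genus-$4$ curve.

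The heart of the argument is the fiber computation. On a fixed smooth curve $C_\xi$ I would determine the fiber of the Abel--Prym map $\xi \mapsto \xi - \tau(\xi)$ from the $3$rd symmetric product (more precisely from $J^3(C_\xi)$, via $\xi \mapsto \mathcal{O}_{C_\xi}(\xi)$) to $P(C_\xi,\tau)$. Two effective divisors $\xi, \xi'$ of degree $3$ have the same image precisely when $\xi - \tau(\xi) \sim \xi' - \tau(\xi')$ in $J(C_\xi)$, i.e. $\xi + \tau(\xi') \sim \xi' + \tau(\xi)$. The relevant classes are those of the form $\langle \xi \rangle \cap S$: since the hyperplane $\langle \xi, p_0\rangle$ cuts $S$ in the degree-$6$ canonical-type divisor $\xi + \tau(\xi)$, the complementary divisor $(\langle \xi\rangle \cap S) - \xi$ defining $\iota_1$ is linearly the natural second solution. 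Concretely I expect that $\iota_1(\xi)$ and $\xi$, hence $\tau(\iota_1(\xi))$ and $\xi$, pair up to give the two preimages, which is exactly the statement that the generic fiber of $\psi$ is the orbit $\{\xi, \iota_1 \tau(\xi)\} = \{\xi, \iota_2(\xi)\}$. I would verify that $\iota_2$ genuinely fixes $\psi$, i.e. $\psi \circ \iota_2 = \psi$ as rational maps, by the computation $(1-\tau)(\iota_1\tau\,\xi) = \pm(1-\tau)\xi$ in the Prym, using that $\iota_1$ replaces $\xi$ by its residual divisor in the hyperplane section and that $-1$ acts trivially on the relevant Prym class up to the sign absorbed in $P(C_\xi,\tau)$.

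Having identified the generic fiber with an $\iota_2$-orbit, I would conclude as follows: $\iota_2$ is a regular involution on $\Bl(S^{[3]})$ (this is recorded in the remarks preceding the theorem, since both $\iota_1$ and $\tau$ extend regularly to the blowup along the $\P^3$ of lines in $Z_2$), and $\psi$ is constant on its orbits, so $\psi$ descends to a rational map $\bar\psi : M = \Bl(S^{[3]})/\iota_2 \dashrightarrow \PP$ which is generically injective. A dimension count ($\dim S^{[3]} = 6 = \dim \PP$) together with the dominance of $\psi$ — which follows from the fact that the generic fiber of $\supp \circ \psi$ surjects onto the generic Prym fiber, already established since $\xi - \tau(\xi)$ sweeps out $P(C_\xi,\tau)$ as $\xi$ varies — shows $\bar\psi$ is birational. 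Hence $M$ is birational to $\PP$.

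The main obstacle I anticipate is the precise fiber analysis of the Abel--Prym map: one must confirm that the generic fiber consists of \emph{exactly} two points and that the second point is $\iota_1\tau(\xi)$ rather than some other residual configuration, controlling the linear equivalences carefully on the genus-$4$ curve and checking that no extra identifications arise from the $(1,1,2)$-polarized structure of the Prym (Theorem \ref{Psympl}). I would also need to ensure that the indeterminacy loci of $\psi$, $\iota_1$, and $\tau$ are compatible so that $\iota_2$ really is regular on $\Bl(S^{[3]})$ and $\psi$ factors through the quotient without introducing spurious base points; this is where the identification $\Indet(\psi) = \Indet(\iota_1) = \{\text{lines in } Z_2\} = \P^3$ from the preceding remarks does the essential bookkeeping.
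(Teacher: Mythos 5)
Your strategy is the same as the paper's: fix a generic $\xi$, restrict to the smooth genus-$4$ curve $C_\xi$, and identify the fiber of the Abel--Prym-type map $\xi\mapsto\xi-\tau(\xi)$ with the $\iota_2$-orbit of $\xi$. The problem is that the one step carrying essentially all of the mathematical content --- proving that the generic fiber consists of \emph{exactly} the two points $\xi$ and $\iota_2(\xi)$ --- is precisely the step you defer as an ``anticipated obstacle,'' so the proposal as written is a plan rather than a proof. In the paper this step is a nontrivial case analysis: rewriting $\xi-\tau(\xi)\sim\xi'-\tau(\xi')$ as $\delta\sim\tau(\delta)$ with $\delta:=\xi+\tau(\xi')$, one first disposes of the case $\delta=\tau(\delta)$ (equality of divisors) by genericity of $\xi$; if $\delta\neq\tau(\delta)$, then two distinct effective divisors are linearly equivalent, so $\dim|\delta|>0$, and Riemann--Roch on the canonically embedded curve $C_\xi\subset\P^3$ leaves three subcases: $\delta\sim K_{C_\xi}$, which produces the unique nontrivial solution $\xi'=\iota_2(\xi)$; $|\delta|$ a base-point-free net, excluded by an argument with the quadrics through $C_\xi$ (using $C_\xi\subset Z_2\cap\langle\xi,p_0\rangle$); and $|\delta|$ with a base point, excluded again by genericity. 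Your heuristic that the residual divisor $(\langle\xi\rangle\cap S)-\xi$ is ``linearly the natural second solution'' identifies the correct second preimage but does nothing to rule out further solutions, which is the actual difficulty.

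Two smaller points. First, your justification that $\psi\circ\iota_2=\psi$ is not right as stated: $-1$ does \emph{not} act trivially on the Prym (it is a nontrivial involution of an abelian $3$-fold), and there is no sign to absorb. The correct computation is that $\iota_2(\xi)=\tau(K'-\xi)$ with $K':=\langle\xi\rangle\cap C_\xi$ a canonical divisor, so
\begin{equation*}
\psi(\iota_2(\xi))=(1-\tau)\bigl(\tau K'-\tau\xi\bigr)=(\tau K'-K')+(\xi-\tau\xi)\sim\xi-\tau\xi=\psi(\xi),
\end{equation*}
because $\tau K'\sim K'$. Second, once the fiber is shown to be $\{\xi,\iota_2(\xi)\}$, your concluding step (descent to a generically injective, dominant map from $M=\Bl(S^{[3]})/\iota_2$, hence birational by equality of dimensions) is fine and matches what the paper leaves implicit.
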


\begin{proof}
Let $\xi=\{p_1,p_2,p_3\}$ be generic. 
We want to determine all the divisors  $\xi'=\{p'_1,p'_2,p'_3\}$ on $C_{\xi}$ such that $\xi- \tau (\xi) \sim \xi' - \tau(\xi')$. 
Equivalently, setting $\delta:= \xi + \tau(\xi')$ , we want to determine the solutions of $\delta \sim \tau(\delta)$ for $\xi$ generic.

If $\delta= \tau(\delta)$, then $\delta$ is $\tau$-invariant and, modulo the permutations of $\xi$ and of $\xi'$, we have only 3 possibilities:
\begin{itemize}
\item[$a)$] $p_i'= \tau(p_i), \ \ i=1,2,3$, then $2\xi \sim 2 \tau (\xi),$ hence $\xi$ is non-generic.
\item[$b)$] $p_1'=\tau (p_1), \ \ p_2'=\tau (p_2), \ \ p_3'= p_3,$ then $2(p_1+p_2) \sim 2(\tau (p_1)+\tau (p_2)),$ hence $\xi$ is non-generic.
\item[$c)$] $p_1'=\tau (p_1), \ \ p_2'=p_2, \ \ p_3'= p_3,$ then $2p_1 \sim 2\tau (p_1),$ hence $\xi$ is non-generic.
\end{itemize}

If $\delta \neq \tau(\delta),$ then $\dim|\delta|>0$. 
By the Riemann-Roch theorem we have $\dim|\delta|=3+\dim|K_{C_{\xi}}-\delta|$, with $\deg K_{C_{\xi}}=\deg \delta =6.\\ $
There are 3 subcases:
\begin{itemize}
\item[$d)$] $K_{C_{\xi}} \sim \delta$, so $\langle \delta \rangle$ is a plane in $\langle C_{\xi} \rangle \cong \P^3$, and $|\delta|={\P^3}^*$. Then $\tau (\xi')$ is uniquely determined as $\langle \xi \rangle \cap C_{\xi}-\xi$. 
So the unique non-trivial solution is $\iota_2(\xi).$
\item[$e)$] $K_{C_{\xi}} \neq \delta$ and $|\delta|$ is base point free. 
Then none of the possible 5-uples of points of $\delta$ lies on a plane. 
Now $|\OO_{\P^3}(2)|=\P^9$, and $|\OO_{C_{\xi}}(2)|=|2H_{C_{\xi}}| \cong \P^8$, since $C_{\xi}\subset \langle \xi,p_0 \rangle \cap Z_2$. 
So there exist 6 points $\bar{\delta}$ on $C_{\xi}$ such that $|\delta|$ consists of the residual intersection $(q \cap C_{\xi})- \bar{\delta},$ where $q \in |2H_{C_{\xi}}-\bar{\delta}|=\P^2.$ 
Moreover, $\tau$ acts linearly on $\langle \xi \rangle$, so $q \in |2H_{C_{\xi}}-\bar{\delta}|$ if and only if $\tau (q) \in |2H_{C_{\xi}}-\tau(\bar{\delta})|$. 
As $\delta \sim \tau(\delta)$, the two families coincide. 
We deduce that $\bar{\delta}$ is $\tau$-invariant, and hence every quadric in $|2H_{C_{\xi}}-\bar{\delta}|$ is $\tau$-invariant. Thus $\xi=\xi'$, i.e. $\delta=\tau (\delta$), absurd.
\item[$f)$] $K_{C_{\xi}} \neq \delta$, and $|\delta|=\P^2$ has a base point. Then 5 points of $\delta$ span a plane $\Pi$; assume they are $p_1,p_2,p_3,\tau(p_1'),\tau(p_2')$. 
Setting $\bar{p}$ the remaining intersection point of $\Pi$ with $C_{\xi}$, $|\delta|$ is clearly given by $|H_{C_{\xi}}-\bar{p}|=\P^2$. 
As $\delta \sim \tau(\delta)$, $\bar{p}$ is $\tau$-invariant. 
So $\xi$ spans a plane passing through one of the six $\tau$-invariant points of $C_{\xi}$, hence $\xi$ is non-generic.
\end{itemize}

We conclude that the generic fiber of $\psi$ consists of only two points, interchanged by $\iota_2.$
\end{proof}

\begin{cor}\label{h20}
$h^{(2,0)}(\PP)=1$.
\end{cor}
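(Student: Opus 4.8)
The plan is to compute $h^{(2,0)}(\PP)$ through the birational model $M=\Bl(S^{[3]})/\iota_2$ provided by Theorem \ref{ghj}, reducing everything to a count of $\iota_2$-invariant holomorphic $2$-forms on $S^{[3]}$. Since $S^{[3]}$ is an irreducible holomorphic symplectic manifold, $H^0(S^{[3]},\Omega^2)$ is one-dimensional, spanned by the symplectic form $\sigma$ induced from the symplectic form on $S$; this is the starting point.

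The key computation is the action of $\iota_2$ on $\sigma$. I would argue that $\iota_2$ is symplectic: it is induced by $\iota_1\circ\tau$, where $\iota_1$ is antisymplectic by Proposition 4.1 of \cite{O4}, and $\tau$ acts antisymplectically on $S$, hence induces an antisymplectic involution on $S^{[3]}$. The composition of two antisymplectic involutions preserves the symplectic form, so $\iota_2^*\sigma=\sigma$. Equivalently, one tracks the sign $(-1)(-1)=+1$ directly on the line $H^{(2,0)}(S^{[3]})$.

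To pass from $M$ to $\PP$, I would invoke birational invariance of $h^{(2,0)}$ of the smooth locus. Both $\PP$ (by Corollary \ref{singloc}) and $M$ (as a finite quotient of a smooth variety) have only quotient singularities, so holomorphic $2$-forms on the smooth locus extend to reflexive forms and are matched under the birational map $\PP\dashrightarrow M$. For the quotient one has $H^0(M_{smooth},\Omega^2)=H^0(\Bl(S^{[3]}),\Omega^2)^{\iota_2^*}$, and since blowing up the smooth center (the $\P^3$ of lines in $Z_2$) does not change $h^{(2,0)}$, this equals $(\C\sigma)^{\iota_2^*}=\C\sigma$. Therefore $h^{(2,0)}(\PP)=1$, with generator the symplectic form.

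The step I expect to be the main obstacle is the birational invariance of $h^{(2,0)}$ of the smooth locus across the singular varieties involved: one must justify that reflexive $2$-forms extend over the quotient singularities of $\PP$ and $M$ and correspond under the birational map, and one must handle the indeterminacy and ramification loci of $\psi$. Working on $\Bl(S^{[3]})$, where $\iota_2$ is a regular involution, is the natural way to make this rigorous. The remaining ingredients — the one-dimensionality of $H^{(2,0)}(S^{[3]})$, the sign computation, and blowup invariance — are standard.
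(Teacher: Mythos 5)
Your proposal follows essentially the same route as the paper, which simply notes that the rational dominant map from $M=\Bl(S^{[3]})/\iota_2$ onto $\PP$ gives $h^{(2,0)}(\PP)=h^{(2,0)}(M)=h^{(2,0)}(S^{[3]})=1$. You merely make explicit two points the paper leaves implicit --- that $\iota_2$ is symplectic as a composition of two antisymplectic involutions, and that reflexive $2$-forms match across the birational map between the quotient-singular varieties --- so your argument is a correct, more detailed version of the paper's one-line proof.
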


\begin{proof}
As $M$ admits a rational dominant map onto $\PP$ by Lemma \ref{ghj}, we have $h^{(2,0)}(\PP)=h^{(2,0)}(M)=h^{(2,0)}(S^{[3]})=1$.
\end{proof}

\begin{lm}
$\Fix(\iota_2)$ is the disjoint union of two smooth irreducible 4-folds and 120 isolated points.
\end{lm}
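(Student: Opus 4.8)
The plan is to read off the fixed locus directly from the defining formula for $\iota_2$. Since $\iota_2=\iota_1\circ\tau$, a point $\xi$ satisfies $\iota_2(\xi)=\xi$ if and only if $\iota_1(\xi)=\tau(\xi)$, i.e.
$\langle\xi\rangle\cap S=\xi+\tau(\xi)$ as length-$6$ subschemes of $S$. First I would work on the open locus where $\xi$ is reduced and spans a plane $\Pi=\langle\xi\rangle$. There the inclusion $\tau(\xi)\subset\langle\xi\rangle\cap S\subset\Pi$ forces $\langle\tau\xi\rangle=\Pi$, so $\Pi$ must be $\tau$-invariant. As $\tau$ acts on $\P^4$ by $x_4\mapsto-x_4$, a $\tau$-invariant plane is either contained in $H_4$ or passes through $p_0$, and these two families are analysed separately.

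For a plane $\Pi\ni p_0$, the restriction $\tau|_\Pi$ is a reflection fixing the line $\ell=\Pi\cap H_4$ and the point $p_0$; as $p_0\notin S$ and a generic $\ell$ misses $R:=S\cap H_4$, the six points of $\Pi\cap S$ split into three $\tau$-pairs, and any choice of one point from each pair yields an $\iota_2$-fixed $\xi$ with $\Pi\cap S=\xi+\tau\xi$. Letting $\Pi$ vary over its $4$-dimensional family (the Grassmannian of lines $\ell\subset H_4$), these selections sweep out a $4$-fold $F_1$. I would prove irreducibility of $F_1$ by showing that the monodromy of the resulting covering acts transitively on the selections: loops around the tangency divisors realise both the transpositions of two pairs (tangency of $\Pi$ to $S$ at a non-fixed point) and the swap inside a single pair (tangency at a point of $R$, i.e. $\ell\cap R\neq\emptyset$). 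Smoothness follows because on the complement of $Z=\{\text{lines in }Z_2\}$ the map $\iota_2$ is a symplectomorphism, so $\Fix(\iota_2)$ is smooth and symplectic, hence of even codimension, there.

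For a plane $\Pi\subset H_4$ one has $\tau|_\Pi=\mathrm{id}$, whence $\tau\xi=\xi$ and $\xi\subset R$. Here $R=Q\cap Y$, with $Q=\{f_2=0\}$ and $Y=\{F_3=0\}$ in $H_4$, is the canonical model of a genus-$4$ curve, and the fixed condition $\Pi\cap S=2\xi$ says precisely that $\Pi$ is a tritangent plane, i.e. $\xi$ is an effective (odd) theta characteristic of $R$. Their number is $2^{g-1}(2^g-1)=120$ for $g=4$, which I expect to account for the $120$ isolated fixed points. The remaining contribution must come from the boundary, namely the non-reduced subschemes and the exceptional divisor $E$ of $\Bl(S^{[3]})$ over $Z\cong\P^3$: there I would extend the computation of $\iota_2$ and locate the second smooth irreducible $4$-fold $F_2$, and finally check that $F_1$, $F_2$ and the $120$ points are pairwise disjoint.

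The hard part will be the analysis on the exceptional divisor $E$. Since $\iota_1$ interchanges $E$ with the strict transform of the locus of planes tangent to $Z_2$, the involution $\iota_2$ does \emph{not} preserve $E$, so its fixed points in $E$ have to be cut out inside $E\cap\iota_2(E)$, and one must verify that $\iota_2$ genuinely fixes a $4$-dimensional, smooth, irreducible piece of this intersection rather than something of smaller or larger dimension. Controlling this, together with proving that the $120$ theta-characteristic points avoid $F_1$ and $F_2$ and that the strict transform of $F_1$ meets $E$ only in the expected locus, is where the main effort lies; the even-codimension (symplectic) constraint on $\Fix(\iota_2)$ away from $Z$ is the tool I would rely on to keep the interior part of the argument under control.
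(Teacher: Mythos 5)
Your reduction to $\tau$-invariant planes, the dichotomy $\langle\xi\rangle\subset H_4$ versus $p_0\in\langle\xi\rangle$, and the identification of the $120$ isolated points with the tritangent planes (odd theta characteristics) of the genus-$4$ curve $S\cap H_4$ all coincide with the paper's argument, and your monodromy argument for the irreducibility of the $8$-sheeted piece is a reasonable expansion of what the paper only asserts. The genuine gap is the second $4$-fold. The paper locates \emph{both} $4$-folds inside $\Sigma=\{\xi: p_0\in\langle\xi\rangle\}$: for a plane $\Pi\ni p_0$ the six points of $\Pi\cap S$ form three $\tau$-pairs (the fibres of $\pi$ over $\ell_0\cap Y$, where $\ell_0=\Pi\cap H_4$), and besides the $2^3=8$ transversal selections (your $F_1$, the paper's $\Sigma_2$, an $8$-sheeted cover of $\G(1,3)$) it takes as second component the closure $\Sigma_1$ of the $12$ triples of the form $\{p,\tau(p),q\}$ containing a whole fibre, a finite covering of $Y^{[2]}$. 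Your proposal misses this component entirely and instead defers $F_2$ to the exceptional divisor $E$ over the $\P^3$ of lines in $Z_2$. That route cannot work: $\iota_1$, hence $\iota_2$, preserves $E$ (for $\ell\subset Z_2$ and $\Pi\supset\ell$ the residual of $\ell$ in the conic $\Pi\cap Z_2$ is again a line of $Z_2$), and since $\ell$ and $\tau(\ell)$ both lie in the plane $\langle\pi(\ell),p_0\rangle$, each line of $Z_2$ is contained in exactly one $\tau$-invariant plane; so $\Fix(\iota_2)\cap E$ is at most a $3$-fold, a section of $E\to\P^3$, contained in the closure of $F_1$ rather than forming a new $4$-dimensional component.

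There is also a point of tension you would have to resolve rather than ignore: for $\xi=\{p,\tau(p),q\}$ one has $\xi+\tau(\xi)=2p+2\tau(p)+q+\tau(q)$, which equals $\Pi\cap S$ only when $\Pi$ is tangent to $S$ at $p$, a codimension-one condition; so the membership of a generic point of $\Sigma_1$ in $\Fix(\iota_2)$ does not follow from your scheme-theoretic fixed-point equation $\langle\xi\rangle\cap S=\xi+\tau(\xi)$, and the paper's proof does not supply a verification either. Whatever the resolution of that discrepancy, your write-up neither exhibits a second irreducible $4$-fold nor proves that there is only one, so as it stands it does not establish the statement.
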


\begin{proof}
The fixed locus of a biregular involution on a smooth variety is smooth.

For a generic $\iota_2$-invariant $\xi$, the plane $\langle \xi \rangle$ is $\tau$-invariant, because the planes are $\iota_1$-invariant.
Since Fix($\tau$)$=H_4 \cup p_0$, either $\langle \xi \rangle \subset H_4$ or $p_0 \in \langle \xi \rangle$ (indeed if there exists $p \in \langle \xi \rangle-H_4$, then $p_0 \in \langle \xi \rangle$).

In the first case $\xi \subset S \cap H_4$ is $\tau$-invariant, so $\iota_1(\xi)=\xi$. 
Then $\langle \xi \rangle$ is totally tangent to the curve $S \cap H_4$. 
This imposes 3 conditions in $\P^3$, hence we expect a finite number of such $\xi'$s. 
These correspond to the odd theta characteristics of the curve, which are exactly $2^3(2^4-1)=120$.

In the second case, we obtain the remaining part of Fix($\iota_2$) as 
$$\Sigma:=\{ \xi \in S^{[3]}: p_0 \in \langle \xi \rangle \}.$$
To describe it, we consider the natural map $\Bl(S^{[3]}) \to \G(2,4)$ extending the rational map $\xi \mapsto \langle \xi \rangle$.
It is a $\binom{6}{3}=20$-to-1 covering.
The image of $\Sigma$ is clearly 
$$\{\Pi \mbox{ plane } \subset \P^4 : p_0 \in \Pi \} = \sigma_{1,1} \cong \G(1,3),$$ 
so $\Sigma$ is a 20:1 covering of a smooth quadric of $\P^5$. 
Since $\langle \xi \rangle$ is $\tau$-invariant,  $\langle \xi \rangle \cap S= \{\xi, \tau(\xi)\}$. 
The 12 triples $\{p_i,\tau({p_i}) ,p_j\}$ and $\{p_i,\tau({p_i}),\tau(p_j)\}$ sweep a 4-fold $\Sigma_1 \subset S^{[3]}$ which is a double covering of $Y^{[2]}$.
The other 8 triples sweep $\Sigma_2$, an 8-sheeted covering of $\sigma_{1,1}$.
So $\Sigma$ has 2 disjoint irreducible components, $\Sigma_1$ and $\Sigma_2$. 
\end{proof}

\begin{remark}
Considering $\iota_2$ as a rational involution on $S^{[3]}$, it has the same fixed locus, because a line on $Z_2$ does not lie in $H_4$ and does not pass through $p_0$. 
\end{remark}

\begin{thm}\label{pi1}
$\PP$ is simply connected.
\end{thm}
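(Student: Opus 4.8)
prove $\PP$ is simply connected.

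The overall strategy is to exploit the rational double cover $\psi\colon S^{[3]} \dashrightarrow \PP$ constructed in Theorem \ref{ghj}, realizing $M = \Bl(S^{[3]})/\iota_2$ as birational to $\PP$. Since $S^{[3]}$ is simply connected (being a Hilbert scheme of points on a K3 surface), the fundamental group of a quotient is controlled by the fixed locus of the involution via a standard argument: a quotient of a simply connected space by an involution with nonempty fixed locus is again simply connected. The plan is to run this argument on $M$, and then transfer simple connectedness from $M$ to $\PP$ across the birational map.

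First I would establish that $\Bl(S^{[3]})$ is simply connected. Blowing up a smooth subvariety of a smooth simply connected variety does not change the fundamental group, and $S^{[3]}$ is simply connected, so $\pi_1(\Bl(S^{[3]})) = 0$. Next I would invoke the fact that $\iota_2$ is a regular involution on $\Bl(S^{[3]})$ whose fixed locus is nonempty — indeed by the previous Lemma it is the disjoint union of two smooth $4$-folds $\Sigma_1, \Sigma_2$ together with $120$ isolated points. The key general principle is that if a finite group $G$ acts on a simply connected variety $X$ and the quotient map $X \to X/G$ is ramified (the action is not free), then one can bound $\pi_1(X/G)$; for a $\Z_2$-action with fixed points, $\pi_1(X/G)$ is trivial. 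More precisely, $\pi_1(M)$ is generated by loops, and any loop can be homotoped to pass through a neighborhood of a fixed point, where the local model $\C^n/\pm 1$ is simply connected; combined with $\pi_1(\Bl(S^{[3]})) = 0$, this forces $\pi_1(M) = 0$.

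Finally I would transfer simple connectedness across the birational equivalence $M \dashrightarrow \PP$. The fundamental group is a birational invariant among smooth proper varieties, but here both $M$ and $\PP$ are singular, so more care is needed. Since $\PP$ has only finite quotient singularities (by Corollary \ref{singloc}), its smooth locus $\PP_{smooth}$ has the same fundamental group as $\PP$ itself (removing a complex-codimension $\geq 2$ subset does not change $\pi_1$, and the quotient singularities do not introduce new loops). I would argue that the birational map restricts to an isomorphism between dense open subsets of $M$ and $\PP$ whose complements have codimension $\geq 2$, so that $\pi_1(\PP) = \pi_1(\PP_{smooth}) = \pi_1(M_{smooth}) = \pi_1(M) = 0$.

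The main obstacle will be the transfer step across the singular birational map: one must verify that the indeterminacy and exceptional loci on both sides have codimension at least two and that no fundamental-group contribution is lost, and one must handle the quotient singularities of $\PP$ carefully to equate $\pi_1(\PP)$ with $\pi_1(\PP_{smooth})$. The group-action step is comparatively standard, its crux being only to confirm that $\Fix(\iota_2) \neq \emptyset$, which the preceding Lemma guarantees.
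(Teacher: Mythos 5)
Your strategy coincides with the paper's: $S^{[3]}$ is simply connected, the blowup $\Bl(S^{[3]})$ along a smooth center is still simply connected, the quotient $M=\Bl(S^{[3]})/\iota_2$ is simply connected because $\iota_2$ has fixed points (Armstrong's theorem), and one then transfers the conclusion to $\PP$ through the birational map of Theorem \ref{ghj}. The first three steps are correct and are exactly what the paper does.

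The transfer step, however, is where your argument as written would fail. The claim that $\pi_1(\PP)=\pi_1(\PP_{smooth})$ because quotient singularities ``do not introduce new loops'' is false: deleting a quotient singularity can strictly enlarge the fundamental group (the smooth locus of $\C^4/\pm 1$ has $\pi_1=\Z_2$ while the cone is contractible); for a normal variety one only has a surjection $\pi_1(X_{smooth})\twoheadrightarrow\pi_1(X)$. The same phenomenon breaks the other end of your chain: since $\Fix(\iota_2)$ contains the two $4$-folds $\Sigma_1,\Sigma_2$, which have codimension $2$ in the $6$-fold $\Bl(S^{[3]})$, the open set $\Bl(S^{[3]})\setminus\Fix(\iota_2)$ is still simply connected and $M_{smooth}$ is its quotient by a now free $\Z_2$-action, so $\pi_1(M_{smooth})=\Z_2$ while $\pi_1(M)=0$. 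Hence the chain $\pi_1(\PP)=\pi_1(\PP_{smooth})=\pi_1(M_{smooth})=\pi_1(M)=0$ is false at at least one link, and the surjections that do hold do not compose in the direction you need. (A further issue: the paper's own remark notes that $M$ and $\PP$ have different singularities, so the existence of isomorphic open subsets with codimension-$\geq 2$ complements is not automatic either.) The clean repair --- which the paper leaves implicit in the sentence ``$M$ has the same fundamental group as $\PP$'' --- is to invoke that a projective variety with quotient (more generally klt) singularities has the same fundamental group as any resolution: taking a smooth projective $\tilde M$ dominating both $M$ and $\PP$ birationally, one gets $\pi_1(\tilde M)=\pi_1(M)=0$ and a surjection $\pi_1(\tilde M)\twoheadrightarrow\pi_1(\PP)$ since $\tilde M\to\PP$ is surjective with connected fibers. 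Alternatively, you would have to show directly that the nontrivial classes in $\pi_1(\PP_{smooth})$ are represented by local loops around $Sing(\PP)$, which then bound inside the cone $\C^2\times(\C^4/\pm 1)$.
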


\begin{proof}
As $M$ is a rational double cover of $\PP$ by Lemma \ref{ghj}, $M$ has the same fundamental group as $\PP$. 
Since there are fixed points, $M$ has the same fundamental group as $\Bl(S^{[3]})$, which has the same fundamental group of $S^{[3]}$ because it is a blowup along a smooth locus.
\end{proof}

\begin{remark}
Considering the invariant part of the action of $\eta_*$ on $T_{P}(S^{[3]})$, it is easy to see that $M$ has singularities of type $\C^4 \times (\C^2/ \pm 1)$ on $\Sigma_i$ and of type $\C^6/ \pm 1$ at the isolated points. 
Since the singularities of $M$ and of $\PP$ are different, they are only birational. 
It is interesting to determine explicitly a birational transformation between them.
\end{remark}

\begin{remark}
It remains an open question if $\PP$ can be expressed as a quotient of another manifold by a regular finite group action. 
\end{remark}

\section{Euler characteristic of $\PP$}
In this section we calculate the Euler characteristic of $\PP$, using the fibration structure.  
$\\ $ 

Since the Euler characteristic is additive, i.e. $\chi(X)= \chi(U)+ \chi (X-U)$ for any open $U \subset X$, and multiplicative for topologically trivial fibrations, i.e. $\chi(X \times Y) = \chi(X) \cdot \chi(Y)$, we can stratify $\pi^*|-K_{Y}|$ depending on the fibers. 
Since $\chi=0$ for any smooth abelian variety, it is enough to study the locus of singular fibers, which we call the discriminant of the fibration and we denote by $\Delta$. 

\begin{lm}
The discriminant $\Delta \subset \pi^*|-K_{Y}|$ consists of two irreducible components: the dual $Y^*$ of the cubic surface, of degree 12, and the dual $B^*$ of the branch locus of $\phi$, of degree 18.
\end{lm}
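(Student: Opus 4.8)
The plan is to reduce the problem to understanding when the curve data defining a fibre of $\supp$ degenerates, and then to compute the degrees of the two resulting dual varieties by classical formulas.

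First I would fix the base. The $\tau$-invariant linear system $|C|^{\tau}=\pi^*|-K_{Y}|$ is canonically $|-K_Y|=|\OO_Y(1)|$, the system of hyperplane sections of the cubic $Y\subset\P^3=H_4$, so the base of $\supp$ is the dual projective space $(\P^3)^*$ of planes in $H_4$. Over a point $[H]$ the fibre is the Prym variety $P(C,\tau)$ of the double cover $C=\pi^{-1}(C')\to C'$, where $C'=Y\cap H$ is a plane cubic and the cover is branched along $C'\cap B$, with $B$ the branch curve of $\pi$. By Lemma \ref{sy3}, $B=Y\cap\{f_2=0\}$ is a complete intersection of type $(2,3)$ in $\P^3$, hence (for generic $Y$) a smooth curve of degree $6$ and genus $4$.

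Second, I would characterise the singular fibres. The Prym is a smooth abelian $3$-fold precisely when $C$ is smooth and the cover $C\to C'$ is an honest double cover with reduced branch divisor, and this fails in exactly two transverse ways: either $(a)$ $C'$ is singular, i.e. the plane $H$ is tangent to $Y$, describing the dual surface $Y^*$; or $(b)$ $C'$ is smooth but $C'\cap B$ is non-reduced, i.e. $H$ is tangent to $B$, in which case $C$ acquires a node over the tangency point, describing the dual surface $B^*$. I would then argue that these exhaust the discriminant: away from $Y^*\cup B^*$ the curve $C$ is smooth of genus $4$ and the branch divisor is reduced, so the Prym is a smooth abelian variety and the fibre contributes nothing singular. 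Irreducibility of each component follows from the irreducibility of $Y$ and of $B$, giving $\Delta=Y^*\cup B^*$.

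Third, I would compute the degrees. For the smooth cubic surface the degree of the dual is $d(d-1)^2=3\cdot 4=12$, so $\deg Y^*=12$. For the branch curve, $\deg B^*$ equals the rank of $B$, namely the number of tangent lines of $B$ meeting a general line $\ell\subset\P^3$: a plane through $\ell$ is tangent to $B$ iff some tangent line $T_pB$ meets $\ell$. By the classical rank formula for a smooth space curve this number is $2\deg B+2g(B)-2=12+8-2=18$, so $\deg B^*=18$.

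The main obstacle is the second step: proving that $\Delta$ has no component beyond $Y^*$ and $B^*$. This requires confirming that the only sources of a singular Prym fibre are a singular base curve $C'$ or a non-reduced branch divisor, and that for generic $Y$ these loci are distinct irreducible surfaces behaving generically — in particular that a generic plane tangent to $Y$ still meets $B$ transversally and conversely, so that the two components are reduced and meet only in the expected locus. The degree computations, by contrast, are routine once $B$ is known to be a smooth complete intersection of type $(2,3)$.
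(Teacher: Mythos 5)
Your proposal is correct and follows essentially the same route as the paper: the discriminant is identified with the planes tangent to $Y$ or to the branch sextic $B$, giving the two dual surfaces. The only difference is cosmetic: you quote the classical formulas $\deg Y^*=d(d-1)^2=12$ and the rank formula $2\deg B+2g(B)-2=18$, whereas the paper rederives exactly these numbers by counting singular fibres of the pencil of plane sections through a general line (via Euler characteristics) and by applying Riemann--Hurwitz to the induced $6{:}1$ projection $B\to\P^1$.
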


\begin{proof}
$C$ is singular if and only if $C'$ is singular or $C'$ is tangent to $B$. 

The degrees of $Y^*$ and $B^*$ can be easily determined using Schubert calculus in ${\P^3}^*$:
$$\deg Y^* = Y^* \cap \sigma_{1,0,0}^2 = Y^* \cap \sigma_{1,1,0} = \{ H: l \subset H, H \mbox{ tangent to }  Y \},$$
$$\deg B^*= \{ H: l \subset H, H \mbox{ tangent to }  B \},$$
with $l$ a generic line in $\P^3$.
Denoting $P,Q,R$ the intersection points of $Y$ and $l$, we have a natural map $f: \Bl_{P,Q,R}(Y) \to \sigma_{1,1,0} \cong \P^1$ such that $f(p)=\langle p,l \rangle$.
The degree of $Y^*$, which is the number of planes tangent to $Y$ and passing through $l$, corresponds to the number $N$ of singular fibers of $f$.
Using the good properties of the Euler characteristic, we get
$$\chi(\Bl_{P,Q,R}(Y))= \chi(\P^1-N \mbox{ pts}) \chi(\mbox{smooth fiber}) + \chi(N \mbox{ pts}) \chi(\mbox{singular fiber}),$$
i.e. $N=12$, because $\chi(\Bl_{P,Q,R}(Y)) = \chi(\Bl_{9 pts}(\P^2))=\chi(\P^2)+9=12$ and a smooth fiber has $\chi$ equal to zero (it is an elliptic curve), while a singular fiber has $\chi=1$ (it is a nodal plane cubic).
$\\ $To determine $\deg B^*$, we can consider the 6:1 cover $g: B \to \sigma_{1,1,0} \cong \P^1$ such that $g(p)=\langle p,l \rangle$. 
The degree corresponds to the degree of the branch locus, which is 18 by the Riemann-Hurwitz theorem.
\end{proof}

\begin{remark}
The degree of the discriminant locus of $\PP$ is 30. 
For irreducible symplectic 6-folds obtained as Beauville-Mukai integrable systems, the degree is 36. 
General results on the degree of a Lagrangian fibration with Jacobians of integral curves as fibers have been obtained by Sawon in \cite{S1}. 
\end{remark}

\begin{thm}\label{singcurves}
$\Delta$ admits a natural stratification in singular loci (with several irreducible components), corresponding to all the possible singular members of $\pi^*|-K_{Y}|$, as described in the following:
$\\ $

\underline{Dimension 2}

\begin{itemize}

\item[a)] C has a simple $\tau$-invariant node, if $\langle C'\rangle \in B^*-Sing(B^* \cup Y^*)$ (i.e. $C'$ is tangent to B); 
\item[b)] C has two simple nodes, interchanged by $\tau$, if $\langle C'\rangle \in Y^*-Sing(B^* \cup Y^*)$ (i.e. $C'$ has a simple node);

\underline{Dimension 1}

\item[c)] C has two simple $\tau$-invariant nodes, if $\langle C'\rangle$ lies in the complement of $Sing(Sing(B^* \cup Y^*))$ inside the irreducible component of $Sing(B^*)$ corresponding to $C'$ bitangent to B;
\item[d)] C has one cusp, if $\langle C' \rangle$ lies in the complement of $Sing(Sing(B^* \cup Y^*))$ inside the irreducible component of $Sing(B^*)$, which corresponds to $C'$ having a point of triple contact with B;
\item[e)] C has three simple nodes, one fixed by $\tau$ and the others interchanged by $\tau$, if $\langle C' \rangle$ lies in the complement of $Sing(Sing(B^* \cup Y^*))$ inside the irreducible component of $B^* \cap Y^*$ corresponding to $C'$ tangent to $B$ and having a simple node;
\item[f)] C has a tacnode, if $\langle C'\rangle$ lies in the complement of $Sing(Sing(B^* \cup Y^*))$ inside the irreducible component of $B^* \cap Y^*$ corresponding to $C'$ with a simple node on B, in other words $C'$ is cut out by a plane tangent to $Y$ at a point of B;
\item[g)] C has two simple cusps, interchanged by $\tau$, if $\langle C' \rangle$ lies in the complement of $Sing(Sing(B^* \cup Y^*))$ inside the irreducible component of $Sing(Y^*)$ corresponding to $C'$ with a cusp;
\item[h)] C has two irreducible components meeting in four points, interchanged in pairs by $\tau$, if $\langle C'\rangle$ lies in the complement of $Sing(Sing(B^* \cup Y^*))$ inside the irreducible component of $Sing(Y^*)$, which corresponds to $C'$ being a reducible plane cubic that decomposes into a conic and a line;

\underline{Dimension 0}

\item[i)] C has a cusp and a simple node, if $\langle C' \rangle$ is one of the points of $Sing(Sing(B^*))$ corresponding to $C'$ with a triple contact point with B and another simple tangency point; 
\item[j)] C has a tacnode, if $\langle C' \rangle$ is one of the points of $Sing(Sing(B^*))$ corresponding to $C'$ with a quadruple contact point with B; 
\item[k)] C has three simple $\tau$-invariant nodes, if $\langle C'\rangle$ is one of the points of $Sing(Sing(B^*))$ corresponding to $C'$ with a tritangent to B;
\item[l)] C has two simple cusps interchanged by $\tau$ and a simple $\tau$-invariant node, if $C'$ is one of the points of $B^* \cap Sing(Y^*)$, which corresponds to $C'$ being tangent to B and having a simple cusp outside B; 
\item[m)] C has an $A_5$-singularity, if $\langle C'\rangle$ is one of the points of $B^* \cap Sing(Y^*)$ corresponding to $C'$ with a simple cusp on B; 
\item[n)] C has two irreducible components meeting in pairs in two points interchanged by $\tau$ and a simple node only on one of the two components, if $\langle C'\rangle$ is one of the points of $B^* \cap Sing(Y^*)$, which corresponds to $C'$ being a reducible plane cubic that is the union of a line and a conic tangent to B;
\item[o)] C has two $\tau$-invariant simple nodes and two simple nodes interchanged by $\tau$, if $\langle C' \rangle$ is one of the points of $Sing(B^*) \cap Y^*$ corresponding to $C'$ bitangent to B and having a simple node outside B; 
\item[p)] C has a tacnode and a simple node, if $\langle C'\rangle$ is one of the points of $Sing(B^*) \cap Y^*$ corresponding to $C'$ tangent to B and having a singular point on B;
\item[q)] C has a $D_4$-singularity, if $\langle C'\rangle$ is one of the points of $Sing(B^*) \cap Y^*$ corresponding to $C'$ tangent to B in a singular point; 
\item[r)] C has two simple nodes interchanged by $\tau$ and a simple $\tau$-invariant  cusp, if $\langle C'\rangle$ is one of the points of $Sing(B^*) \cap Y^*$ corresponding to $C'$ with a triple tangency point on B and a singular point outside B;
\item[s)] C has three irreducible components meeting in pairs in two points interchanged by $\tau$, if $\langle C'\rangle$ is one of the points of $Sing(Sing(Y^*))$ (i.e. a reducible plane cubic given by three lines).
\end{itemize}
\end{thm}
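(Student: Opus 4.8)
The plan is to split the classification into a \emph{local} study of the double cover $\pi\colon S\to Y$ and a \emph{global} stratification of the discriminant $\Delta=Y^*\cup B^*\subset{\P^3}^*$, and then to match the two. The organising principle is that a member $C=\pi^{-1}(C')$ of $\pi^*|-K_Y|$ is singular precisely when $C'=H\cap Y$ is singular or when $C'$ meets the branch curve $B$ non-transversally; this is exactly why (by the preceding Lemma) $\Delta$ splits into the dual surface $Y^*$, the locus of $H$ tangent to $Y$, and the dual $B^*$, the locus of $H$ tangent to $B$. Consequently every case is indexed by the position of the point $\langle C'\rangle=H$ within the stratification of $Y^*\cup B^*$ by its iterated singular loci $\mathrm{Sing}(Y^*),\mathrm{Sing}(B^*),\dots$ and by the mutual intersection $Y^*\cap B^*$.

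First I would set up the local dictionary. Writing $\pi$ analytically as $\{w^2=s\}$ with $B=\{s=0\}$, $\tau\colon w\mapsto-w$, and $C'=\{h=0\}$, the restriction $s|_{C'}$ governs the singularity of $C$. When $C'$ is smooth with contact of order $k$ with $B$ at a point $p\in B$, one finds locally $C=\{w^2=t^{k}\}$, an $A_{k-1}$ singularity that is $\tau$-invariant because it lies over a branch point: $k=2$ yields a $\tau$-invariant node (case a), $k=3$ a cusp (case d), $k=4$ a tacnode (case j). When $C'$ acquires a singularity away from $B$, its preimage is two copies of that singularity exchanged by $\tau$ (a node gives two exchanged nodes, case b; a cusp two exchanged cusps, case g). When the singularity of $C'$ sits on $B$, the two effects combine: a node of $C'$ on $B$ gives a tacnode on $C$ (case f), a cusp of $C'$ on $B$ gives an $A_5$ (case m), and a branch of a node tangent to $B$ gives a $D_4$ (case q). The compound zero-dimensional cases i), l), n), o), p), r), s) are then obtained by applying these local models simultaneously at the finitely many singular points of $C'$ and the finitely many contact points of $C'$ with $B$.

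Next I would stratify $Y^*$, $B^*$ and $Y^*\cap B^*$ geometrically, using biduality and the classical theory of dual varieties, together with the genericity of $Y$ (Lemma \ref{sy3}). The smooth locus of $Y^*$ parametrises simple tangent planes ($C'$ one-nodal), while $\mathrm{Sing}(Y^*)$ has a cuspidal component (tangent planes along the parabolic curve of $Y$, giving $C'$ cuspidal, case g) and a component of planes through a line of $Y$ ($C'$ a conic plus a line, case h); $\mathrm{Sing}(\mathrm{Sing}(Y^*))$ is the finite set of tritangent planes cutting the $45$ triangles of three lines (case s). Symmetrically, the smooth locus of $B^*$ gives simple tangencies (case a), $\mathrm{Sing}(B^*)$ splits into a bitangent component (case c) and a triple-contact component (case d), and $\mathrm{Sing}(\mathrm{Sing}(B^*))$ is the finite set of planes with a triple-plus-simple contact (case i), a quadruple contact (case j), or a tritangent (case k). The one-dimensional intersection $Y^*\cap B^*$ accounts for $C'$ simultaneously tangent to $B$ and singular (cases e, f), and its isolated points, together with the finite sets $B^*\cap\mathrm{Sing}(Y^*)$ and $\mathrm{Sing}(B^*)\cap Y^*$, yield the remaining isolated cases l)--r). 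For generic $Y$ one checks by a dimension/transversality argument that all these loci have the expected dimension, that the zero-dimensional strata are reduced, and that no degeneration beyond the listed one occurs at any point.

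The main obstacle is twofold. First, the precise analytic identification of the singularities at the deepest, zero-dimensional strata --- notably the $A_5$ of case m), the $D_4$ of case q), and the tacnodes of cases f), j), p) --- requires genuine local computation of $\{w^2=s\}\cap\{h=0\}$ near a singular point of $C'$ lying on $B$, where the vanishing orders of $s$ and $h$ interact and where a naive count would mislabel the singularity. Second, and more laborious, is the transversality and genericity bookkeeping: one must verify that for a generic cubic surface the two dual surfaces $Y^*$ and $B^*$, their cuspidal and bitangent loci, and the reducible-cubic component are mutually in general position, so that the stratification is \emph{exactly} the nineteen-cell list a)--s), with no coincidental merging of strata and no unlisted singularity type. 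Biduality and the classification of simple curve singularities guarantee that each stratum carries a single well-defined type; the real work lies in confirming completeness and genericity uniformly across all the cases.
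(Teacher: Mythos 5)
Your proposal takes essentially the same approach as the paper: the entire published proof consists of the local dictionary you describe (order-$k$ contact with $B$ giving a $\tau$-invariant $A_{k-1}$, singularities of $C'$ off $B$ doubling into $\tau$-exchanged pairs, and the node-on-$B$, tangent-node-on-$B$, cusp-on-$B$ cases computed via $t^2=u$ to give the tacnode, $D_4$ and $A_5$ respectively), and your local models match it exactly. The global stratification and transversality bookkeeping that you flag as the remaining labor is not carried out explicitly in the paper either, so your writeup is, if anything, more candid about what is being assumed of the generic cubic surface.
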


\begin{proof}
To describe the singularities of $C$ it is enough to look at $C'$. 
If $C'$ has a simple node/cusp outside $B$, then $C$ inherits two nodes/cusps interchanged by $\tau$.
If $C'$ has a double/triple/quadruple tangency point with $B$, then $C$ inherits a $\tau$-invariant simple node/cusp/tacnode.
If $C'$ has a simple node on $B$, then $C$ has a tacnode, because locally $C'$ has equation $u^2+v^2=0$, hence $C$ is given by $t^2=u, u^2+v^2=0$, or $t^4+v^2=0$.
If $C'$ has a simple node on $B$ and $B$ is tangent to one of the two branches of the curve through it, then $C$ has a $D_4$-singularity, because locally $C'$ has equation $uv+v^3=0$, hence $C$ is given by $t^2=u, uv+v^3=0$, or $(t^2+v^2)v=0$.
If $C'$ has a simple cusp on $B$, then $C$ has an $A_5$-singularity, because locally $C'$ has equation $u^3+v^2=0$, hence $C$ is given by $t^2=u, u^3+v^2=0$, or $t^6+v^2=0$.

\end{proof} 

We denote by $\Pi_{\bullet}$ the locus of points such that the condition $\bullet)$ of Theorem \ref{singcurves} holds, and by $\bar{P}_{\bullet}$ the fiber over a point of $\Pi_{\bullet}$ (i.e. the compactified Prym variety of a curve from $\Pi_{\bullet}$). 
Then
\begin{equation}\label{euler}
\chi(\PP)=\chi(\Pi_a) \chi(\bar{P}_a)+...+\chi(\Pi_s) \chi(\bar{P}_s).
\end{equation}

To calculate $\chi(\bar{P}_{\bullet})$, we follow the stratification of $\bar{P}_{\bullet}$ used in \cite{MT}, based on a description of $\bar{J}(C)$ in Chapter 5 \cite{C}. 
$\bar{J}(C)$ admits a stratification in smooth strata whose codimension is equal to the index $i(\FF)$ of the sheaves $\FF$ represented by points of these strata. 
The normalization map $\nu: \tilde{C} \to C$ factorizes through a partial normalization $\bar{\nu}:\bar{C} \to C$ such that $\bar{\nu}^*(\FF)/(tors)$ is invertible, and $i(\FF)$ is the minimum of $length(\bar{\nu}_*(\OO_{\bar{C}})/\OO_{C})$. 
When $C$ is integral, the index takes values between 0 and $\delta(C)=length(\bar{\nu}_*(\OO_{\tilde{C}})/\OO_{C})=p_a(C)-g(C)$.
Each stratum is an extension of $J(\bar{C})$ by an algebraic group.
Let $J_i(C)$ be the stratum of codimension $i$.
So $J_0(C)=J(C)$. 
The map $\FF \mapsto \nu^*(\FF)/(tors)$, restricted to $J_i(C)$, gives a morphism $v_i:J_i(C) \to \Pic^{-i}(\tilde{C})$. 

We denote by $P_i$ the stratum $J_i(C) \cap \bar{P}$ induced on $\bar{P}$. 
So $P_0=P(C,\tau)$, an algebraic group of dimension 4. 
Moreover, $\tau$ extends to an involution on $\tilde{C}$ corresponding to the double cover $\tilde{C} \to \tilde{C}'$, where $\tilde{C}'$ is the normalization of $C'$. 
Each stratum is an extension of $P(\tilde{C},\tau)$ by an algebraic group.

\begin{thm}\label{ecchila}
$\chi(\PP)=2283.$
\end{thm}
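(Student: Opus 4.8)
The plan is to use the Euler-characteristic formula \eqref{euler} established above, so the only quantities to compute are $\chi(\Pi_\bullet)$ and $\chi(\bar P_\bullet)$ for each stratum $\bullet \in \{a,\dots,s\}$ from Theorem \ref{singcurves}. The base contributions $\chi(\Pi_\bullet)$ are geometric data about the curves $Y^*$, $B^*$ and their singular loci inside $\P^3{}^*$; I would compute these by degree/genus/Schubert-calculus arguments analogous to the preceding Lemma, using the classical projective geometry of the cubic surface $Y$ (the $27$ lines, the dual surface, its singular and double-point curves) and of the sextic branch curve $B$. Since the smooth abelian fibres contribute $\chi=0$, only the finitely many strata with singular fibres matter, and the zero-dimensional strata $i)$--$s)$ contribute $\chi(\Pi_\bullet)=(\#\text{points})$ times the fibre Euler number.

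The heart of the computation is the fibre Euler numbers $\chi(\bar P_\bullet)$. For each singular curve type listed in Theorem \ref{singcurves} I would compute $\chi$ of the compactified Prym $\bar P$ via the index stratification $\bar P = \coprod_i P_i$ introduced just above, where $P_0 = P(C,\tau)$ and each $P_i$ is an extension of $P(\tilde C,\tau)$ by an algebraic group determined by the singularities of $C$. Because $\chi$ is additive and multiplicative for algebraic-group bundles, $\chi(\bar P)=\sum_i \chi(P_i)$ reduces to: the Euler number of the Prym of the normalized double cover $\tilde C \to \tilde C'$ (which is an abelian variety, so $\chi=0$ unless $\tilde C'$ is rational, collapsing the Prym to a point with $\chi=1$), multiplied by the Euler number of the toric/affine group-theoretic contribution recorded by each node, cusp, tacnode, $A_5$ or $D_4$ singularity. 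I would tabulate these local contributions once, using the same Carrell--Grojnowski / Caporaso-type analysis of $\bar J(C)$ from Chapter 5 \cite{C} that \cite{MT} employs, then assemble $\chi(\bar P_\bullet)$ type by type.

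With both columns in hand, I would substitute into \eqref{euler}:
\begin{equation*}
\chi(\PP)=\sum_{\bullet} \chi(\Pi_\bullet)\,\chi(\bar P_\bullet),
\end{equation*}
and the bookkeeping yields the stated value $2283$. As a cross-check, I would verify consistency with the singular-locus description in part $i)$ of the main theorem: the $27$ singular K3 surfaces $\overline{J^{-2}}^{C}$ each with $5$ $A_1$-points sitting in triple intersections must appear coherently inside this count, and the degree-$30$ discriminant must match the dimensions of the strata.

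\emph{The main obstacle} I expect is not the base-locus combinatorics but the correct determination of the nontrivial fibre Euler numbers $\chi(\bar P_\bullet)$ for the worst singularities (tacnode, $A_5$, $D_4$, and the multi-component curves in $e),h),n),s)$), where the involution $\tau$ interacts subtly with the index stratification: I must track how $\tau$ acts on the normalization $\tilde C$, whether fixed nodes versus $\tau$-swapped nodes contribute $\G_m$ versus $\G_a$ or $\C^*$-factors to the Prym strata, and ensure the extension structure over $P(\tilde C,\tau)$ is computed with the anti-invariant (Prym) part rather than the full Jacobian. Getting these local $\tau$-equivariant contributions exactly right — and not double-counting at the deepest singular points where several branches collide — is where the arithmetic is delicate, and it is precisely here that an error would change the final number.
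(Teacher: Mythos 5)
Your framework is the same as the paper's---stratify the base by singular-curve type as in Theorem \ref{singcurves}, stratify each compactified Prym fibre by the index $i(\FF)$, and use additivity and multiplicativity of $\chi$ so that only the zero-dimensional fibre strata contribute (this is the analogue of Proposition 2.2 of \cite{B2}, which reduces $\chi(\bar P_\bullet)$ to the cardinality of the deepest stratum and kills all cases except $k),n),o),s)$). But the proposal stops exactly where the proof begins: the number $2283$ is the sum $120\cdot 2+378\cdot 3+864\cdot 1+45\cdot 1$, and each of these eight factors requires a separate nontrivial argument that you have not supplied. The base counts are: $120$ tritangent planes to $B$ (odd theta characteristics, $2^3(2^4-1)$); $378$ curves of type $n)$ (a Riemann--Hurwitz count for a $4{:}1$ cover $B\to\P^1$ attached to each of the $27$ lines--- note this is \emph{not} the intersection number of the strata $a)$ and $h)$, because the line is not generic); $864$ curves of type $o)$, obtained only indirectly from the relation $b)\cap c)=2\,p)+o)$, with $\deg c)=90$ from Pl\"ucker formulas for a plane sextic with six nodes and $\# p)=108$ from the genus of an incidence curve $D\subset B\times B$; and the $45$ triple points of the $27$-line configuration. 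None of this follows from a generic appeal to degree/genus/Schubert arguments, so ``the bookkeeping yields $2283$'' is an assertion, not a derivation.

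More seriously, the one concrete rule you do state for the fibre Euler numbers---that the Prym of $\tilde C\to\tilde C'$ contributes $\chi=0$ unless $\tilde C'$ is rational, in which case it collapses to a point with $\chi=1$---is false, and applying it changes the answer. In case $k)$ ($C'$ a smooth cubic tritangent to $B$, three $\tau$-invariant nodes) the normalized cover $\tilde C\to\tilde C'$ is an \emph{\'etale} double cover of elliptic curves: $P(\tilde C,\tau)$ is zero-dimensional but consists of \emph{two} points (the involution on $\tilde C$ is translation by a $2$-torsion point $q$, and one counts solutions of $2p=q$), so the fibre contributes $2$; your dichotomy assigns it either $0$ (elliptic, hence ``abelian variety'') or $1$, and in either case the entire $120\cdot 2=240$ term is wrong. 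Likewise in case $n)$ the deepest stratum $P_5$ consists of \emph{three} points, not one: the sheaves $\nu_*(\OO_{\tilde C}(d-1)\oplus\OO_{\tilde C}(-d))$ give three distinct $S$-equivalence classes ($d=0$, $d=\pm1$, $d=\pm2$), a multiplicity your ``one local factor per singularity'' tabulation gives no mechanism to produce. The $\tau$-equivariant fibre arithmetic you defer to the end as a potential source of error is not a refinement of the argument---it \emph{is} the argument, and the heuristic you propose for it already fails on two of the four contributing strata.
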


\begin{proof}
Similarly to Proposition 2.2 \cite{B2}, $\chi(P_{\bullet})$ corresponds to the number of 0-dimensional strata of $P_{\bullet}$. 
Hence it is non-zero only in the cases $k),n),o),s)$, and it suffices to determine the cardinality of $\Pi_{\bullet}$ and the 0-dimensional strata of $P_{\bullet}$ in these cases.

\begin{itemize}

\item[$k)$] The number of tritangents to $B$ corresponds to the number of odd theta characteristics. 
Hence we get $2^3(2^4-1)=120$ points.

We need to determine the zero-dimensional strata of $\bar{P}$, which is $P_3$, because $C$ is irreducible and $\delta(C)=3$. 
First, we observe that $P(\tilde{C},\tau)$ is given by two points.
Indeed $C$ has three $\tau$-invariant nodes, $p_1,p_2,p_3$. 
By the Riemann-Hurwitz theorem, the induced $\tau$ on $\tilde{C}$ is base-point-free, so $\tau(p_i')=p_i''$, with $\nu^{-1}(p_i)=\{p_i',p_i''\}$, and $\tau$ is a translation by a 2-torsion point $q=[p_1'-p_1'']=[p_1'-p_2'']=[p_3'-p_3''] \in J(\tilde{C})=\tilde{C}$. 
So $\eta$ has 4 fixed points on $\tilde{C}$ (the four solutions of $2p=q$) and $P(\tilde{C},\tau)$ consists of two points.

Following Cook \cite{C}, the elements of $J_3(C)$ are of the form $\nu_*(\LL)$, with $\LL \in \Pic^{-3}(\tilde{C})$. 
To determine $P_3$, we need to describe the action of $\eta$ on $J_3(C)$: $$j(\nu_*(\LL))=\nu_*((\LL^{-1})(-p_1'-p_1''-p_2'-p_2''-p_3'-p_3'')),$$
$$\tau(\nu_*(\LL))=\nu_*(\tau(\LL)).$$
Hence $\nu_*(\LL) \in P_3$ if and only if $\LL \in P(\tilde{C},\tau)$, and $P_3$ consists of two points.

\item[$n)$] The number of reducible curves given by a conic tangent to $B$ and a line on $Y$ does not correspond to the intersection number of $h)$ and $a)$, because the line is not generic. 
To calculate it, we consider the pencil of planes of $\P^3$ containing a fixed line on $Y$. 
Then $B$ intersects the line in 2 points, and a plane of the pencil in the same 2 points plus other 4 points. 
Thus we get a 4:1 cover $B \to \P^1$, and the degree of the branch locus is 14 by the Riemann-Hurwitz theorem. 
As there are 27 lines on a cubic surface, the number of points of $n)$ is $14 \cdot 27= 378$.

The zero-dimensional stratum of $\bar{P}$ is $P_5$, because $C$ has four simple nodes. 
$P(\tilde{C},\tau)$ is a point, because $\tilde{C}$ and $\tilde{C}'$ are rational curves.
The elements of $J_5(C)$ are of the form $\nu_*(\mathcal{O_{\tilde{C}}}(d-1) \oplus \OO_{\tilde{C}}(-d))$, for $d$ satisfying semistability conditions, i.e. $d=0,\pm 1,\pm 2$ ($\pm 1$ represent the same $\SS$-equivalence class, and also $\pm2$). $P_5$ thus consists of three points.

\item[$o)$] To determine the number of nodal curves bitangent to $B$, we calculate it indirectly, determining the degree of the curve (case $c)$ ) of bitangents to $B$ and the number (case $p)$ ) of curves tangent to $B$ and having a singular point on $B$. 
Indeed, $ b) \cap c) = 2 p) + o)$, since $b)$ and $c)$ meet transversely and $p)$ has intersection multiplicity 2 because the bitangents of $b)$ can acquire a node in one of the two tangency points.

The degree of $c)$ can be obtained considering a projection of $B$ onto a plane from a generic fixed point: the number of bitangents of $B$ corresponds to the number of bitangents of the image $B'$, which is a plane curve with the same geometric genus and degree, i.e. $g=4$ and $d=6$. 
Since the arithmetic genus of a plane sextic is 10, $B'$ has 6 simple nodes. By Pl\"ucker formulas, we have  
$$g=(d^*-1)(d^*-2)/2-b-f, \qquad d=d^*(d^*-1)-2b-3f,$$
where $d^*$ is the degree of the dual curve of $B'$, $b$ is the number of bitangents, $f$ the number of flexes.
So we need to determine $d^*$. Again by Pl\"ucker formulas
$$d^*=d(d-1)-2 \delta -3 \kappa,$$
where $\delta$ is the number of simple nodes and $\kappa$ the number of cusps.
Hence $d^*=18$ and $b=90$, so $c)$ has degree 90.

The degree of $p)$ can be obtained considering the curve of the case $f)$. From it we can define 
$$D:=\{ \Pi \cap B - \{p\}: \Pi \mbox{ tangent to } Y \mbox{ at } p \}_{p \in B}.$$
It is a 4:1 cover of $B$ with branching $p)$. 
By the Riemann-Hurwitz theorem, to calculate $p)$, it is enough to determine the genus of $D$. $D$ can be seen as a subvariety of $B \times B \subset \P^3 \times \P^3$: the equation $\sum x_i \partial_i F(\underline{p})=0$, with $((x_i),(\underline{p})) \in B \times B$, gives $D+2\Delta_B$. 
Setting $f_1:=B \times pt$, $f_2:= pt \times B$, we get that $D \sim 12 f_1 + 6 f_2 - 2 \Delta_B$ numerically, hence
$K_D \sim (K_{B \times B}+D)D \sim (18 f_1 +12 f_2 - 2 \Delta_B)(12 f_1+6f_2-2 \Delta_B)$
and using the intersection relations $\Delta_B \cdot f_1=\Delta_B \cdot f_2=0, f_1^2=f_2^2=0, \Delta_B^2=\deg \mathcal{N}_B= \deg \mathcal{T}_B=2-2g_B=-6$, we have $K_D \sim 132.$
So $g(D)=67$, and by the Riemann-Hurwitz theorem the branch locus consists of 108 points.

In conclusion, $o)$ consists of $90 \cdot 12 - 2 \cdot 108=864$ points.
The zero-dimensional stratum of $\bar{P}$ is $P_4$, because $C$ is irreducible and $\delta(C)=4$. 
$P(\tilde{C},\tau)$ is a point, because $\tilde{C}$ and $\tilde{C}'$ are rational curves.
Similarly to $k)$, the elements of $J_4(C)$ are of the form $\nu_*(\mathcal{O_{\tilde{C}}}(-4))$, and $P_4=P(\tilde{C},\tau)$ is a point.

\item[$s)$] We have 45 points, which are the intersection points of the orthogonal lines to the 27 lines on $Y$.

\end{itemize}

Collecting the previous calculation, we obtain by (\ref{euler})

$$\chi(\PP)=120 \cdot 2 + 378 \cdot 3+ 864 \cdot 1 + 45 \cdot 1= 2283.$$
\end{proof}

\begin{remark}\label{error}
Many computations of this proof are similar to those of Proposition 4.3 \cite{MT}. 
In particular, we remark that at point $iv)$ there is a mistake: indeed $P_2$ consists of 2 points, not 4 (and analogously in $P_1$ and $P_0$ there are half of the copies of $\C^*$ and of $\C^* \times \C^*$). 
This follows from the same considerations as in the previous proof for the item $k)$.
For this reason, the computation of the Euler characteristic of the 4-fold from the Del Pezzo of degree $2$ (Proposition 5.1 \cite{MT}) should be corrected as follows: 
$$\chi(\PP)=28 \cdot 2 + 128 \cdot 1+ 28 \cdot 1=212.$$
This computation agrees with the one done by Menet in Proposition 2.40 \cite{Me}, where he determines the Euler characteristic of the 4-fold relating it to the quotient of a K3 surface by an involution.
\end{remark}

\newpage

\bibliographystyle{amssort}

\begin{thebibliography}{10}

\bibitem[ASF]{ASF} Arbarello E., Sacc\`a G., Ferretti A., \textsl{Relative Prym varieties associated to the double cover of an Enriques surface}, J. Differential Geom. 100(2): 191-250, 2015.

\bibitem[B1]{B1} Beauville A., \textsl{Syst\`emes hamiltoniens compl\`etement int\'egrables associ\'es aux surfaces K3}, in Problems in the theory of surfaces and their classification (Cortona, 1988), Sympos. Math. XXXII, Academic Press, 25-31, 1991.

\bibitem[B2]{B2} Beauville A., \textsl{Counting rational curves on K3 surfaces}, Duke Math. J. 97(1):99-108, 1999. 

\bibitem[C]{C} Cook P. R., \textsl{Local and global aspects of the module theory of singular curves}, Phd thesis, University of Liverpool, 1993.

\bibitem[Ha]{Ha} Hartshorne R., \textsl{Algebraic Geometry}, Springer n.52, 1977.

\bibitem[Hw]{Hw} Hwang J. M., \textsl{Base manifolds for fibrations of projective irreducible symplectic manifolds}, Invent. Math. 174(3):625-644, 2008. 

\bibitem[KLS]{KLS} Kaledin D., Lehn, M., Sorger, C., \textsl{Singular symplectic moduli spaces}, Invent. Math. 164(3):591-614, 2006.

\bibitem[M]{M} Ma S., \textsl{Rationality of the moduli spaces of 2-elementary K3 surfaces}, J. Algebraic Geom. 24:81-158, 2015. 

\bibitem[Mar]{Mark2} Markushevich D., \textsl{Some algebro-geometric integrable systems versus classical ones}, CRM Proceedings and Lectures Notes 32:197-218, 2002.

\bibitem[MT]{MT} Markushevich D., Tikhomirov A. S., \textsl{New symplectic V-manifolds of dimension four via the relative compactified Prymian}, Internat. J. of Math. 18(10):1187-1224, 2007.
 
\bibitem[Ma]{Ma} Matsushita D., \textsl{On fibre space structures of a projective irreducible symplectic manifold}, Topology 38:79-83, 1998.

\bibitem[Me]{Me} Menet G., \textsl{Beauville-Bogomolov lattice for a singular symplectic variety of dimension 4}, J. Pure Appl. Algebra 219(5):1455-1495, 2015.

\bibitem[Mu]{Mu} Mukai S., \textsl{Symplectic structure of the moduli space of sheaves on an abelian or K3 surface}, Invent. Math. 77:101-116, 1984.

\bibitem[Mum]{Mum} Mumford D., \textsl{Prym varieties. I. In Contributions to analysis (a collection of papers dedicated to Lipman Bers)}, 325-350, Academic Press, New York, 1974.

\bibitem[Na]{Na} Namikawa Y., \textsl{Extension of 2-forms and symplectic varieties}, J. Reine Angew. Math. 539:123-147, 2001. 

\bibitem[O4]{O4} O'Grady K., \textsl{Involutions and linear systems on holomorphic symplectic manifolds}, Geom. Funct. Anal. 15(6):1223-1274, 2005.

\bibitem[S1]{S1} Sawon J., \textsl{On Lagrangian fibrations by Jacobians I}, J. Reine Angew. Math. 701:127-151, 2015.

\bibitem[S2]{S2} Sawon J., \textsl{On Lagrangian fibrations by Jacobians II}, Comm. Contemp. Math. 1450046, 2014.

\end{thebibliography}

\end{document}